\theoremstyle{plain}
\newtheorem{theorem}{Theorem}[section]
\newtheorem{definition}[theorem]{Definition}
\newtheorem{assumption}[theorem]{Assumption}
\newtheorem{lemma}[theorem]{Lemma}
\newtheorem{remark}[theorem]{Remark}
\newtheorem*{notations}{Notations}
\numberwithin{equation}{section}
\def\section{\@startsection{section}{1}%
	\z@{1.5\linespacing\@plus\linespacing}{.5\linespacing}%
	{\normalfont\bfseries\large\centering}}
\def\R{{\mathbb R}}% real numbers
\def\N{{\mathbb N}}% nonnegative integers
\def\Z{{\mathbb Z}}% integers
\def\T{{\mathbb T}}% torus
\def\eps{\varepsilon}
\def\calL{{\mathcal L}}
\def\IE{\mathbb{E}}
\def \IR{\mathbb R}
\newcommand{\EE}{\IR^d}
\def \N{\mathbb N}
\def \eps{\epsilon}
\def\uX{\underline{X}}
\def\ux{\underline{x}}
\newcommand{\avg}[1]{\left< #1 \right>}
\title{
Uniformly accurate schemes for drift--oscillatory stochastic differential equations
}
\author{ 
Ibrahim Almuslimani\textsuperscript{\textit{\lowercase{a}},$\ast$}, Philippe Chartier\textsuperscript{\textit{\lowercase{b}}}, Mohammed Lemou\textsuperscript{\textit{\lowercase{c}}} and Florian M\'ehats\textsuperscript{\textit{\lowercase{d}}}
}
\begin{document}
\renewcommand{\thefootnote}{\textit{\alph{footnote}}}
\footnotetext[1]{Univ Rennes, INRIA, IRMAR - UMR 6625, F-35000 Rennes, France. Ibrahim.Almuslimani@irisa.fr}
\footnotetext[2]{
Univ Rennes, INRIA, CNRS, IRMAR - UMR 6625, F-35000 Rennes, France.
Philippe.Chartier@inria.fr}
\footnotetext[3]{
Univ Rennes, CNRS, IRMAR - UMR 6625, F-35000 Rennes, France.
Mohammed.Lemou@univ-rennes1.fr}
\footnotetext[4]{
Univ Rennes, CNRS, IRMAR - UMR 6625, F-35000 Rennes, France.
Florian.Mehats@univ-rennes1.fr}
\renewcommand{\thefootnote}{\fnsymbol{footnote}}
\footnotetext[1]{The work of I. Almuslimani is supported by the Swiss National Science Foundation, project No: P2GEP2\_195212.}
\renewcommand{\thefootnote}{\arabic{footnote}}
\maketitle

\begin{abstract}
In this work, we adapt the  {\em micro-macro} methodology to stochastic differential equations for the purpose of numerically solving oscillatory evolution equations. The models we consider are addressed in a wide spectrum of regimes where oscillations may be  slow or  fast. We show that through an ad-hoc transformation (the micro-macro decomposition), it is possible to retain the usual orders of convergence of  Euler-Maruyama method, that is to say, uniform weak order one and uniform strong order one half.  {We also show that the same orders of uniform accuracy can be achieved by a simple integral scheme. The advantage of the micro-macro scheme is that, in contrast to the integral scheme, it can be generalized to higher order methods.}

\smallskip
\noindent
{\it Keywords:\,}
highly-oscillatory,  stochastic differential equations,  micro-macro decomposition, uniform accuracy.
\smallskip

\noindent
{\it AMS subject classification (2010):\,}
65L20, 74Q10, 35K15.
 \end{abstract}
 
 \section{Introduction}
 In this paper, we aim at  constructing  {\em uniformly accurate} numerical schemes for solving It\^o stochastic differential equations (SDEs) with a (possibly highly) oscillatory drift term of the form
 \begin{align} \label{eq:SDE}
 	d X (t)=& f_{t/\eps}(X(t)) dt + \sigma(X(t)) dW(t), \quad X(0) = X_0 \in \R^d,
 \end{align}
 where $X(t)$ is a stochastic process with values in $\R^d$.  {A standard assumption in the literature of oscillatory problems and averaging theory is that the drift function $(\theta,x)\in \T\times\R^d\mapsto f_\theta(x)$ is assumed to be periodic\footnote{ {Typically, the deterministic part of \eqref{eq:SDE} is a result of a change of variable applied to the "non-filtered" equation of the form $\dot v = -\eps^{-1}Av+g(v)$, where the matrix $A$ is such that $\exp(A)=I$, which leads to the periodic vector field $f_\theta$ \cite[Remark 1.1]{CLMV20}.}} with respect to $\theta$ (we shall denote accordingly the torus by  $\T=\R/\Z\equiv[0,1]$), see e.g. \cite{SVM07,CLMV20,CCMM15}. For simplicity and ease of the presentation, and without loss of generality, we assume in the rest of the paper that $(\theta,x)\in \T\times\R^d\mapsto f_\theta(x)$ is 1-periodic}. The diffusion function $\sigma$ is  defined as a smooth function $x\in\R^d \mapsto \sigma(x)\in\R^{d\times m}$. Finally, $W(t)=(W_1(t),...,W_m(t))^T$ is an array of $m$ independent one-dimensional Weiner processes. Precise regularity assumptions on $f$ and $\sigma$ are made in Assumption \ref{eq:hypo}. Let us emphasize that $\eps$ is here a parameter whose value can freely vary in the interval $]0,1]$ and that equation (\ref{eq:SDE}) is not restricted to its asymptotic regime where $\eps$ tends to zero {, i.e., the oscillations may be slow or fast without any restriction on their frequency}.  {This model may appear in many interesting applications including the perturbation of deterministic oscillatory  problems by adding a random noise term for modeling purposes. For example, in Section \ref{sec:num}, we consider a stochastic H\'enon-Heiles model, one might also think of stochastic Fermi-Pasta-Ulam problem and the stochastic nonlinear Schr\"odinger equation \cite{V14}.}
 
 Analogously to the case of deterministic differential equations, in the highly-oscillatory (stiff) regime ($\epsilon \ll 1)$, standard  numerical methods for SDEs, such as Euler-Maruyama method,  face a severe time step restriction ($\Delta t=\mathcal{O}(\eps)$) when applied directly to \eqref{eq:SDE}. This issue is well documented in the literature for deterministic ODEs  \cite{SVM07}, and several classes of methods were introduced in order to deal with stiffness \cite{BD12,CCMM15,CMMV14, CLM13,FS14}. However, none of the methods introduced therein qualify as uniformly accurate methods as they do not produce numerical approximations with an accuracy and at a cost both independent of the value of $\eps \in ]0,1]$. This motivated the introduction of a new methodology based on averaging techniques and exposed in \cite{CLMV20}: there, the authors elaborate a new technique enabling standard numerical methods to retain their non-stiff order with uniform accuracy for all $\eps\in]0,1]$. In the case of stiff SDEs, as for ODEs, we can differentiate between two kinds of stiffness: stiff dissipative SDEs and highly-oscillatory SDEs. While for stiff dissipative SDEs, many interesting integrators were introduced in the last two decades with nice stability and convergence properties \cite{AAV18,AbL08,AVZ13,AbV12a},  {the numerical solution of  highly-oscillatory (drift) SDEs has not received so much attention except, for example, \cite{V14} in which the author derives weak second order multirevolution composition methods which are accurate only for very small values of $\eps$ and fail for $\eps\simeq 1$}. This contribution is, up to our knowledge, the first attempt to adapt the technique of  micro-macro decomposition to the SDE context in order to construct a \emph{uniformly accurate} integrator that works well for every $\eps\in ]0,1]$. 
 
 More precisely, we focus , in the present paper, on constructing uniformly accurate methods for highly-oscillatory SDEs in the spirit of the methodology explained in \cite{CLMV20}. We derive a micro-macro system by introducing a change of variable that leads us to treat the average decay and the fast oscillations separately. We show that applying Euler-Maruyama method to the micro-macro system gives,  {under appropriate assumptions}, an approximation of uniform weak order 1 and strong order 1/2 for any value of $\eps\in(0,1]$ and with no restriction on the time step. In more mathematical terms, we prove that,  {under Assumption \ref{eq:hypo} below}, the Euler-Maruyama scheme for solving the {\em micro-macro} system derived from \eqref{eq:SDE}, provides approximations $X_n$ on a uniform grid $\{t_n\}_{n=0}^{N}$ such that 
 \begin{align*} 
 	\forall n=0, \ldots, N, \quad |\IE(\phi(X(t_n))) - \IE(\phi(X_n)) | \leq C \,  h  \mbox{ and }  
 	\IE(|X(t_n) - X_n|^2)^{\frac12} \leq Ch^{\frac12},
 \end{align*}
 where $h=t_{n+1}-t_n$ (for simplicity we consider constant time step size) and the constant $C$ is independent of $n$, $h$ and  $\eps$. 
 We prove as well that the same result can be obtained using the integral scheme introduced in Section \ref{sec:int}.  {After Noticing the simplicity of implementation and the uniform accuracy of the integral scheme, a natural question arises: why do we consider the micro-macro decomposition? The answer is, interestingly, deriving the micro-macro method gives insight about possible generalizations to higher-order schemes inspired from deterministic averaging theory developed in \cite{CLMV20}}. \\
 
 The rest of the paper is organized as follows. In the next subsection, we fix the main notations used throughout the paper and we make assumptions. In Section \ref{sec:int} we introduce the integral scheme and we prove its uniform convergence (weak and strong) with respect to the parameter $\eps$. In Section \ref{sec:mm} we derive the micro-macro scheme, and we show its uniform convergence properties. Finally, in Section \ref{sec:num} we present some numerical experiments that illustrate the efficiency of the schemes.
 \subsection*{Notations and Assumptions}
 Noteworthy, our results are obtained under quite standard assumptions that we recall below. In particular, we shall constantly suppose that Assumption \ref{eq:hypo} below holds true in the sequel. 
 \begin{definition}
 	Let $q\in\N$, we define $C_{poly}^q$ as the set of $q$-times differentiable functions from $\R^d\to\R$ whose all derivatives up to order $q$ have at most polynomial growth.  {We define as well $C_{lin}^q$ in the same way but with linear growth instead of polynomial}.
 	% 	\begin{align*}
 		% 		& C_{poly}^q := \left\{  \phi \in C^q(\R^d, \R): \, \exists \kappa \in \N\backslash\{0\}, \;  \forall {\bf k} \in \N^d, |{\bf k}| \leq q, \; \|(1+|x|^\kappa)^{-1} \partial_x^{\bf k} \phi  \|_{L^\infty} < +\infty \right\}, \\
 		% 		& C_{Lip}^q := \left\{  \phi \in C^q(\R^d, \R^d): \;  \forall {\bf k} \in \N^d, |{\bf k}| \leq q, \; \partial_x^{\bf k} \phi  \text{ is Lipschitz continuous} \right\}.
 		% 	\end{align*}
 \end{definition}
 
 \begin{assumption} \label{eq:hypo}
 	The functions $x \mapsto\sigma_{i,j}(x)$, $i=1,\ldots,d$, $j=1, \ldots,m$ are Lipschitz functions of the set  $C^2_{poly}$.
 	The function $(\theta,x)  \mapsto f_\theta(x) \in \R^d$ is defined on $\T \times \R^d$ and it is uniformly Lipschitz continuous with respect to $x$ (i.e with Lipschitz constant independent of $\theta$). In addition, its components $(f_\theta(\cdot))_i$, $i=1, \ldots, d$, are functions of  {$C_{poly}^4\cap C_{lin}^2$}. 
 	%\begin{align*}
 	%(f_\theta)_i \in C^1 \left( \T, C_l^2 \right). 
 	%\end{align*}
 	Furthermore, $f$ is continuously differentiable with respect to $\theta$ and $x\mapsto\partial_\theta f_\theta(x)$ is  uniformly Lipschitz continuous with respect to the variable $x$.
 \end{assumption}
 
 \begin{remark}
 	The Lipschitz continuity of $f_\theta$ and $\sigma$ ensures the existence and uniqueness of the solution of \eqref{eq:SDE} in $C([0,T] ; L^2(\Omega))$. It ensures in addition a linear growth, that is to say 
 	\begin{equation*}
 		\forall (\theta,x) \in \T \times \R^d, \quad |f_\theta(x)|+|\sigma(x)|\leq K(1+|x|),
 	\end{equation*}
 	where $K$ is a constant independent of $\theta$. 
 \end{remark}
 
 \begin{notations}
 	The derivative with respect to the space variable $x$ will be denoted by a \emph{prime}. For example, $f'_\theta(x)=\partial_xf_\theta(x)$ (and gradient or Jacobien when needed), and $f''_\theta(x)=\partial^2_xf_\theta(x)$ (Hessian matrix or second derivative tensor when needed). The letter $L$ will be used throughout the paper as the maximum Lipschitz constant of the concerned functions. The constants represented by the capital letter $C$ (indexed or not) are generic constants. We denote by $|\cdot|$ the Euclidean norm of $\R^d$.
 \end{notations}
 { We will conduct all our analysis for $m=1$ in \eqref{eq:SDE}. This comes for the sake of simplicity and ease of presentation, and to avoid any confusion due to the heavy computations done in the proofs. However, we emphasize that our results apply straightforwardly to SDEs of the form \eqref{eq:SDE} with any number of Weiner processes. Hence, $\sigma$ will denote a function from $\R^d\to\R^d$ and $W(t)$ will denote a standard one-dimensional Weiner process.}

 \section{Integral scheme} \label{sec:int}
 Let $T>0$ be a given final time, $N\in \N\backslash\{0\}$, and $h=T/N$, and let $t_n=nh,~n=0,\dots,N$. We consider the integral scheme
 \begin{equation}\label{eq:int}
 	X_{n+1}=X_n+h\avg{f}(X_n)+\eps\int_{t_n/\eps}^{t_{n+1}/\eps}(f_{\theta}(X_n)-\avg{f}(X_n))d\theta + \sigma(X_n)\Delta W_n,
 \end{equation}
 where $\avg{f}(x)$ is the value at $x$ of the average of $f$ defined as 
 $$
 \avg{f}(x)  = \int_\T f_\theta(x) d\theta,
 $$
 and $\Delta W_n=W(t_{n+1})-W(t_n)\sim \mathcal{N}(0,h)$ which can be replaced by $\sqrt{h}\xi_n$ where $\xi_n\sim \mathcal{N}(0,1)$.
 Note that the integral term in \eqref{eq:int} can be rewritten as $\eps(F_{t_{n+1}/\eps}(X_n)-F_{t_n/\eps}(X_n))$, where 
 $$
 F_\theta(x)  = \int_0^\theta \left( f_\tau(x) - \avg{f}(x) \right) d\tau.
 $$
 
 \begin{theorem}\label{th:convint}
 	%	Suppose that $f_\theta$ and $\sigma$ are uniformly Lipschitz continuous (Assumption \ref{eq:hypo} is not needed here). 
 	Consider the integral scheme \eqref{eq:int}, where $t_n=nh$ for $n=0, \ldots, N$, and $N\in\N\backslash\{0\}$. 
 	Let $X(t)$ be the solution of \eqref{eq:SDE}. Then, for all $\phi \in C_{poly}^4$, there exists 
 	%	$h_0 >0$ and 
 	$C>0$ independent of $h$, $n$ and $\eps \in (0,1]$ such that, 
 	%for all $h\leqh_0$,
 	\begin{align}
 		\forall n \in \{0, \ldots, N\}, \quad &|\IE(\phi(X(t_n))) - \IE(\phi(X_n)) | \leq C  h,\\
 		\forall n \in \{0, \ldots, N\}, \quad  &\IE(|X(t_n) - X_n|^2)^{\frac12} \leq Ch^{\frac12}. \label{eq:strong}
 	\end{align}
 \end{theorem}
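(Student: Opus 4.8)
\emph{The plan.} The starting point is the observation that the integral scheme integrates the oscillatory drift \emph{exactly} over each step. Using the substitution $\theta=s/\eps$ together with $\eps\int_{t_n/\eps}^{t_{n+1}/\eps}\avg{f}(X_n)\,d\theta=h\avg{f}(X_n)$, the two $\avg{f}$ contributions in \eqref{eq:int} cancel and the increment becomes
\[
X_{n+1}-X_n=\int_{t_n}^{t_{n+1}}f_{s/\eps}(X_n)\,ds+\sigma(X_n)\Delta W_n,
\]
i.e. the scheme is exactly the Euler--Maruyama method in which only the \emph{argument} of the drift and diffusion is frozen at $X_n$, while the fast time dependence $s\mapsto f_{s/\eps}$ is kept intact. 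This is the structural reason why no factor $1/\eps$ will ever appear. I would first record the standard ingredients, all with constants independent of $\eps$: uniform-in-$n$ moment bounds $\sup_n\IE|X_n|^{2p}\le C$ and $\sup_{t\le T}\IE|X(t)|^{2p}\le C$ (from the linear growth of the Remark and discrete/continuous Gronwall), together with the continuity estimate $\IE|X(s)-X(t_n)|^2\le C(s-t_n)$ for $s\in[t_n,t_{n+1}]$. Uniformity in $\eps$ is immediate here because the linear-growth and Lipschitz constants of $f_\theta$ are uniform in $\theta$.

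\emph{Strong estimate \eqref{eq:strong}.} Set $e_n=X(t_n)-X_n$. Subtracting the two increments gives $e_{n+1}=e_n+D+S$ with $D=\int_{t_n}^{t_{n+1}}[f_{s/\eps}(X(s))-f_{s/\eps}(X_n)]\,ds$ and $S=\int_{t_n}^{t_{n+1}}[\sigma(X(s))-\sigma(X_n)]\,dW(s)$. I would split $D=D_1+D_2$, where $D_2=\int_{t_n}^{t_{n+1}}[f_{s/\eps}(X(t_n))-f_{s/\eps}(X_n)]\,ds$ is $\mathcal F_{t_n}$-measurable with $|D_2|\le Lh|e_n|$ (uniform Lipschitz constant $L$), while the genuinely forward part $D_1$ satisfies $\IE|D_1|^2\le Ch^3$ by the continuity estimate. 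Similarly $S$ is an It\^o integral with $\IE|S|^2\le Ch^2+Ch\,\IE|e_n|^2$ by the isometry. Since $e_n+D_2$ is $\mathcal F_{t_n}$-measurable and $\IE[S\mid\mathcal F_{t_n}]=0$, the cross term with $S$ vanishes; bounding the remaining cross terms by Young's inequality yields the one-step recursion $\IE|e_{n+1}|^2\le(1+Ch)\,\IE|e_n|^2+Ch^2$, and the discrete Gronwall lemma gives $\IE|e_n|^2\le Ch$, hence strong order $1/2$. Every constant is $\eps$-free because $f_\theta$ is uniformly Lipschitz in $\theta$ and the drift integral is exact.

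\emph{Weak estimate.} I would use the backward Kolmogorov function $u(t,x)=\IE[\phi(\bar X^{t,x}(T))]$, where $\bar X^{t,x}$ solves \eqref{eq:SDE} started from $x$ at time $t$; by It\^o's formula $u(t,\bar X(t))$ is a martingale, so that $\IE[u(t_{n+1},\bar X^{t_n,X_n}(t_{n+1}))\mid\mathcal F_{t_n}]=u(t_n,X_n)$. Telescoping $\IE[\phi(X_N)]-u(0,X_0)$ and inserting this identity to discard the vanishing terms,
\[
\IE[\phi(X_N)]-u(0,X_0)=\sum_{n=0}^{N-1}\IE\big[u(t_{n+1},X_{n+1})-u(t_{n+1},\bar X^{t_n,X_n}(t_{n+1}))\big],
\]
each summand being a \emph{local weak error} (the argument applies on $[0,t_n]$ for every index, which yields the stated bound at each grid point). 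I would then Taylor-expand $u(t_{n+1},\cdot)$ about $X_n$ and compare the conditional moments of the numerical increment $X_{n+1}-X_n$ and the exact increment $\bar X^{t_n,X_n}(t_{n+1})-X_n$: the first moments agree up to $O(h^2)$ (because the drift is integrated exactly and $\IE[\bar X(s)-X_n]=O(s-t_n)$), the second moments agree up to $O(h^2)$ (the leading $\sigma(X_n)\sigma(X_n)^{T}h$ being common and all cross and correction terms being $O(h^2)$), the leading third-order term vanishes by the Gaussianity of $\Delta W_n$, and the fourth moments of both increments are $O(h^2)$. Each local weak error is therefore $O(h^2)$ and the sum is $O(h)$.

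\emph{Main obstacle.} The crux is the \emph{$\eps$-uniform} spatial regularity of $u$: I must show that $\partial_x^\alpha u$ is bounded, with polynomial growth, uniformly in $\eps$ for $1\le|\alpha|\le4$. This is obtained by differentiating the flow $x\mapsto\bar X^{t,x}(T)$ and controlling the associated variational SDEs, whose coefficients are $f'_{s/\eps},\dots,f^{(4)}_{s/\eps}$ and the derivatives of $\sigma$; since these are bounded uniformly in $\theta=s/\eps$ --- which is exactly what $f\in C^4_{poly}\cap C^2_{lin}$ and the assumptions on $\sigma$ provide --- Gronwall yields moment bounds on the flow derivatives with $\eps$-independent constants, and combining with $\phi\in C^4_{poly}$ and the uniform moment bounds gives the claim. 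I expect this regularity step, and the careful verification that each moment-matching term carries an $\eps$-independent constant, to be the delicate parts; everything else is the classical Euler--Maruyama weak and strong analysis adapted to a time-dependent but exactly integrated drift.
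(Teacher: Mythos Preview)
Your proposal is correct and follows a route that is recognisably different from the paper's, though the underlying ingredients are the same.

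For the \emph{strong} estimate you run a direct error recursion $e_{n+1}=e_n+D+S$ and close it with discrete Gronwall. The paper instead proves a one-step bound $\IE(|X(t_{n+1})-X_{n+1}|^2\mid X(t_n)=x)\le C(x)h^2$ (using the continuity Lemma~\ref{lemma:strong}) together with a local weak $O(h^2)$ bound, and then appeals to Milstein's strong convergence theorem as a black box. Your argument is more self-contained; the paper's is shorter because the recursion is hidden in the cited theorem.

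For the \emph{weak} estimate the difference is similar in spirit. The paper Taylor-expands the test function $\phi$ itself about $x=X(t_n)=X_n$, bounds the four resulting terms I--IV (this is where Lemmas~\ref{lemma:df} and~\ref{lemma:strong} and an It\^o-formula expansion of $\sigma(X(t))-\sigma(X(t_n))$ enter), and then invokes Milstein's global weak convergence theorem. You instead introduce the backward Kolmogorov function $u$, telescope, and compare moments of the two increments against $\partial_x^\alpha u$. The point you correctly flag as the ``main obstacle''---$\eps$-uniform bounds on $\partial_x^\alpha u$, $|\alpha|\le 4$, obtained from the variational SDEs whose coefficients are uniformly bounded in $\theta$---is exactly what the paper leaves packaged inside the cited Milstein theorem. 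So you are making explicit a step the paper delegates, and the uniform-in-$\theta$ hypotheses of Assumption~\ref{eq:hypo} are precisely what both routes need.

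Two places where your sketch is a bit compressed: the first-moment match requires more than $\IE[\bar X(s)-X_n]=O(s-t_n)$---you need either an It\^o expansion of $f_{s/\eps}(\bar X(s))-f_{s/\eps}(X_n)$ (the paper's Lemma~\ref{lemma:df}) or a spatial Taylor expansion of $f_{s/\eps}$ with the $O(s-t_n)$ second-moment bound controlling the remainder; and the third-order terms for the \emph{exact} increment do not vanish by Gaussianity (that increment is not Gaussian), but are $O(h^2)$ after expanding $\sigma(\bar X(s))$ around $\sigma(X_n)$. Both are routine once noticed.
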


 \begin{lemma}\label{lemma:df}
 	Under the assumptions of Theorem \ref{th:convint}, one has for all $n=0,\dots,N-1$
 	\begin{equation} \label{eq:mean}
 		\left|\IE\left(\int_{t_n}^{t_{n+1}}[f_{t/\eps}(X(t))-f_{t/\eps}(X(t_n))]dt\right)\right|\leq Ch^2.
 	\end{equation}
 \end{lemma}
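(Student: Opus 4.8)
The plan is to move the expectation inside the time integral by Fubini and then, for each fixed $s\in[t_n,t_{n+1}]$, to estimate $\IE[f_{s/\eps}(X(s))-f_{s/\eps}(X(t_n))]$ by expanding the increment of $X$ with It\^o's formula. The naive route --- bounding the integrand pathwise by $L\,|X(s)-X(t_n)|$ and using $\IE|X(s)-X(t_n)|^2\le C(s-t_n)$ --- only yields an $\mathcal{O}(h^{3/2})$ bound, because it throws away the cancellation carried by the stochastic integral. The whole point is that, \emph{after} taking expectations, the martingale part drops out and only the drift and the It\^o correction survive, each of size $\mathcal{O}(s-t_n)$.

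Concretely, I would fix $s$ and apply It\^o's formula (componentwise) to the $C^2$ map $x\mapsto f_{s/\eps}(x)$, with the oscillation index $s/\eps$ frozen, to write
\begin{equation*}
f_{s/\eps}(X(s))-f_{s/\eps}(X(t_n)) = \int_{t_n}^s f'_{s/\eps}(X(u))\,f_{u/\eps}(X(u))\,du + \tfrac12\int_{t_n}^s f''_{s/\eps}(X(u))\big(\sigma(X(u)),\sigma(X(u))\big)\,du + \int_{t_n}^s f'_{s/\eps}(X(u))\,\sigma(X(u))\,dW(u).
\end{equation*}
Since $s/\eps$ is the same in both evaluated terms, no $\theta$-derivative appears and there is no hidden factor $\eps^{-1}$; this is what keeps the estimate uniform in $\eps$. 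Taking expectations annihilates the last (martingale) integral, so $\IE[f_{s/\eps}(X(s))-f_{s/\eps}(X(t_n))]$ reduces to the expectation of the two Lebesgue integrals over $[t_n,s]$.

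It then remains to bound the two surviving integrands in $L^1(\Omega)$, uniformly in $u$ and in $\theta$. Using Assumption \ref{eq:hypo} --- namely that $f_\theta,f'_\theta,f''_\theta$ have at most linear growth and $\sigma$ has linear growth, all with constants uniform in $\theta$ by periodicity --- the first integrand grows at most quadratically and the second at most cubically in $|X(u)|$. Hence both are controlled by the moment bounds $\sup_{t\in[0,T]}\IE|X(t)|^p<\infty$, which hold for every $p$ under the linear growth of the coefficients (these bounds also justify that the stochastic integral is a genuine martingale with vanishing mean). This gives $|\IE[f_{s/\eps}(X(s))-f_{s/\eps}(X(t_n))]|\le C(s-t_n)$ with $C$ independent of $n$, $s$ and $\eps$.

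Finally I would integrate this pointwise-in-$s$ estimate over $[t_n,t_{n+1}]$, the Fubini exchange being justified by the same integrability, to obtain
\begin{equation*}
\left|\IE\left(\int_{t_n}^{t_{n+1}}[f_{s/\eps}(X(s))-f_{s/\eps}(X(t_n))]\,ds\right)\right| \le \int_{t_n}^{t_{n+1}} C(s-t_n)\,ds = C\,\frac{h^2}{2},
\end{equation*}
which is the assertion \eqref{eq:mean}. The main conceptual obstacle is exactly to recognise that the gain from $h^{3/2}$ to $h^2$ comes from the vanishing \emph{in expectation} of the martingale term, not from any pathwise bound; the remaining work (the moment estimates and the growth bookkeeping needed to invoke Fubini and It\^o) is routine.
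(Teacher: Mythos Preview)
Your argument is correct and follows essentially the same route as the paper: apply It\^o's formula to $x\mapsto f_{\theta}(x)$ with the oscillation index frozen, kill the martingale part by taking expectations, and bound the two remaining Lebesgue integrals via the polynomial/linear growth assumptions and the uniform moment bounds before integrating over the outer time variable. Your presentation is in fact slightly cleaner (you correctly write $f_{u/\eps}$ for the inner drift and make explicit why no $\eps^{-1}$ appears), but the substance is identical.
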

 
 \begin{proof} We first rewrite the integrand $f_{t/\eps}(X(t))-f_{t/\eps}(X(t_n))$ of (\ref{eq:mean}) as
 	\begin{align*}
 		\int_{t_n}^t d(f_{t/\eps}(X(s)))
 		&=\int_{t_n}^t [f'_{t/\eps}(X(s))f_{t/\eps}(X(s))\\
 		&+\frac12f''_{t/\eps}(X(s))(\sigma(X(s)),\sigma(X(s)))]ds\\
 		&+\int_{t_n}^tf'_{t/\eps}(X(s))\sigma(X(s))dW(s),
 	\end{align*}
 	so that 
 	\begin{align*}
 		\left|\IE\left(\int_{t_n}^{t_{n+1}}[f_{t/\eps}(X(t))-f_{t/\eps}(X(t_n))]dt\right)\right|&\leq\left|\IE\left(\int_{t_n}^{t_{n+1}}\int_{t_n}^t \biggl[f'_{t/\eps}(X(s))f_{t/\eps}(X(s))\right.\right.\\
 		&\hspace*{-6ex}\left.\left.+\frac12f''_{t/\eps}(X(s))(\sigma(X(s)),\sigma(X(s)))\biggr]dsdt\right)\right|\\
 		&\hspace*{-5ex}+\left|\IE\left(\int_{t_n}^{t_{n+1}}\int_{t_n}^tf'_{t/\eps}(X(s))\sigma(X(s))dW(s)dt\right)\right|,
 	\end{align*}
 	and using Fubini's theorem, we get the following upper-bound of the left-hand side of (\ref{eq:mean})
 	\begin{align*}
 		\left|\int_{t_n}^{t_{n+1}}\IE\left(\int_{t_n}^tf'_{t/\eps}(X(s))\sigma(X(s))dW(s)\right)dt\right| + \mathcal{O}(h^2),
 	\end{align*}
 	where $\mathcal{O}(h^2)$ is bounded thanks to the polynomial growth assumptions and the boundedness of the moments. The expectation in previous integral is an expectation of a stochastic integral, hence it is null since $X(s)$ is independent of the increments of $W$ for times above $s$.
 \end{proof}
 
 %\begin{lemma}
 %	For $0\leq n\leq N$, $h$ small enough, and $t\in[t_n,t_{n+1}]$, we have
 %	\begin{equation}
 	%		\IE\left(\int_{t_n}^{t_{n+1}}(\sigma(X(t))-\sigma(X(t_n)))dW(t)\right)\leq Ch^{\frac32}.
 	%	\end{equation}
 %\end{lemma}
 %
 %\begin{proof}
 %	\begin{align*}
 	%	\int_{t_n}^{t_{n+1}}(\sigma(X(t))-\sigma(X(t_n)))dW(t)&=\int_{t_n}^{t_{n+1}}\int_{t_n}^td(\sigma(X(s)))dW(t)\\
 	%	&=\int_{t_n}^{t_{n+1}}\int_{t_n}^t\biggr(\sigma'(X(s))f_{s/\eps}(X(s))\\
 	%	&+\frac12 \sigma''(X(s))(\sigma(X(s)) ,\sigma(X(s)) )\biggl)dsdW(t)\\
 	%	&+\int
 	%	\end{align*}
 %\end{proof}
 
 \begin{lemma}\label{lemma:strong}
 	Under the assumptions of Theorem \ref{th:convint}, we have for all $n=0,\dots,N-1$
 	\begin{equation*}
 		\IE(|X(t)-X(t_n)|^2~;~X(t_n)=x)\leq C(x)(t-t_n).
 	\end{equation*}
 \end{lemma}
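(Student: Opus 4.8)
The plan is to start from the It\^o form of the increment over $[t_n,t]$. Conditioning on $X(t_n)=x$ and writing $\tau=t-t_n$, we have
$$X(t)-X(t_n)=\int_{t_n}^t f_{s/\eps}(X(s))\,ds+\int_{t_n}^t \sigma(X(s))\,dW(s),$$
so that, using $|a+b|^2\le 2|a|^2+2|b|^2$, the quantity to be estimated splits into a drift contribution and a diffusion contribution. I would bound the drift term by the Cauchy--Schwarz inequality,
$$\IE\Bigl(\Bigl|\int_{t_n}^t f_{s/\eps}(X(s))\,ds\Bigr|^2;X(t_n)=x\Bigr)\le \tau\int_{t_n}^t \IE\bigl(|f_{s/\eps}(X(s))|^2;X(t_n)=x\bigr)\,ds,$$
and the diffusion term by the It\^o isometry,
$$\IE\Bigl(\Bigl|\int_{t_n}^t \sigma(X(s))\,dW(s)\Bigr|^2;X(t_n)=x\Bigr)= \int_{t_n}^t \IE\bigl(|\sigma(X(s))|^2;X(t_n)=x\bigr)\,ds.$$

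Next I would invoke the linear growth bound from the Remark, $|f_\theta(x)|+|\sigma(x)|\le K(1+|x|)$ with $K$ independent of $\theta$, together with $(1+|x|)^2\le 2(1+|x|^2)$, to reduce both integrands to $\IE(1+|X(s)|^2;X(t_n)=x)$. The crux of the argument is therefore the second-moment estimate
$$\sup_{s\in[t_n,T]}\IE(|X(s)|^2;X(t_n)=x)\le M(x)<\infty,$$
which I would establish by a standard Gr\"onwall argument: applying the same Cauchy--Schwarz / It\^o-isometry decomposition on the interval $[t_n,s]$ yields a closed inequality for $Y(s):=\IE(|X(s)|^2;X(t_n)=x)$ of the form $Y(s)\le |x|^2 + C\int_{t_n}^s(1+Y(r))\,dr$, whence $Y(s)\le (|x|^2+CT)e^{CT}=:M(x)$ on $[t_n,T]$.

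With $M(x)$ in hand, the two contributions above are both bounded by $C(1+M(x))(\tau^2+\tau)$, and since $\tau=t-t_n\le T$ we have $\tau^2\le T\tau$, so the right-hand side is at most $C(x)(t-t_n)$ with $C(x)=C(1+M(x))(1+T)$, which is the claim.

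The step I expect to require the most care is the moment bound and its uniformity in $\eps$: one must verify that the Gr\"onwall constant $C$ (and hence $M(x)$ and $C(x)$) does not degenerate as $\eps\to 0$. This is precisely where Assumption \ref{eq:hypo} enters, since the linear growth constant $K$ is independent of $\theta=s/\eps$; the substitution $\theta=s/\eps$ therefore leaves every bound unchanged, and the estimate holds uniformly for $\eps\in(0,1]$.
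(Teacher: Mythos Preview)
Your proposal is correct and follows essentially the same route as the paper: split the increment via the integral form of the SDE, apply Young's inequality, then Cauchy--Schwarz on the drift integral and It\^o isometry on the diffusion integral, and finish with the uniform linear growth of $f_\theta$ and $\sigma$ together with a second-moment bound on $X(s)$. The only cosmetic difference is that the paper cites the moment bound $\IE(|X(s)|^2)\le K(1+|x|^2)$ from \cite{GS72}, whereas you sketch its derivation via Gr\"onwall; your added remark on the $\eps$-uniformity of the constants is exactly the point the paper relies on implicitly.
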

 
 \begin{proof} From the integral form of equation (\ref{eq:SDE}) 
 	\begin{equation*}
 		X(t)=X(t_n)+\int_{t_n}^{t}f_{s/\eps}(X(s)ds+\int_{t_n}^t\sigma(X(s))dW(s).
 	\end{equation*}
 	we may write 
 	\begin{align*}
 		\IE(|X(t)-X(t_n)|^2~;~X(t_n)=x)&= \IE\left(\left|\int_{t_n}^tf_{s/\eps}(X(s))ds + \int_{t_n}^t\sigma(X(s))dW(s)\right|^2\right)\\
 		&\hspace*{-15ex}\leq 2(t-t_n)\int_{t_n}^t\IE(f_{s/\eps}(X(s))^2)ds +2\int_{t_n}^t\IE(\sigma(X(s))^2)ds\\
 		&\hspace*{-15ex}\leq 2(t-t_n)\int_{t_n}^tK(1+\IE(|X(t_n)|^2))ds+2\int_{t_n}^tK(1+\IE(|X(t_n)|^2))ds\\
 		&\hspace*{-15ex}\leq C(x)(t-t_n),
 	\end{align*}
 	where we have used, between the first and second lines, the Young's inequality, then the Cauchy-Schwarz inequality for the first integral, and It\^o isometry for the second integral. Finally, we have used the bound $\IE(|X(s)|^2)\leq K(1+|x|^2)$,  borrowed from \cite{GS72}, between  the third and the fourth lines.
 \end{proof}

 \begin{proof}[Proof of  Theorem \ref{th:convint}] We hereby follow the methodology introduced in  \cite{talay86}, which consists in bounding the moments (first step), then proving the weak convergence of the scheme (second step) and finally  establishing its strong convergence (third step).
 	\subsection*{Step $1$} In order to bound the moments of $X_n$ of arbitrary order, we shall resort to \cite[Lemma 2.2, p. 102]{Mil04}, which requires the following estimates
 	\begin{align*}
 		|\IE(X_{n+1}-X_n|X_n=x)|&=\left|\IE\left(\eps\int_{t_n/\eps}^{t_{n+1}/\eps}f_{\theta}(x)d\theta\right)\right|\\
 		&\leq\eps\int_{t_n/\eps}^{t_{n+1}/\eps}K(1+|x|)d\theta
 		=K(1+|x|)h,
 	\end{align*}
 	and 
 	\begin{align*}
 		|X_{n+1}-X_n|&=\left|\eps\int_{t_n/\eps}^{t_{n+1}/\eps}f_{\theta}(X_n)d\theta + \sigma(X_n)\Delta W_n\right|\\
 		&\leq K(1+|X_n|)h+K(1+|X_n|)\frac{|\Delta W_n|}{\sqrt{h}}\sqrt{h}
 		\leq  M_n(1+|X_n|)\sqrt{h},
 	\end{align*}
 	where $M_n=K \left(\sqrt{h}+\frac{|\Delta W_n|}{\sqrt{h}}\right)$ which is of bounded moments since $\frac{\Delta W_n}{\sqrt{h}} \sim \mathcal{N}(0,1)$.
 	
 	\subsection*{Step $2$}
 	
 	Let $\phi\in C^4_{poly}$ be a test function. For $X(t_n)=X_n=x$, by performing Taylor expansion of $\phi$ with integral remainder, it can be shown that 
 	\begin{align*}
 		&|\IE(\phi(X(t_{n+1}))-\phi(X_{n+1}))| \leq \biggl|\IE\left[\phi'(x)\left(\int_{t_n}^{t_{n+1}}(f_{t/\eps}(X(t))-f_{t/\eps}(X(t_n)))dt\right)\right]\biggr|\\
 		&+\biggl|\IE\left[\phi''(x)\left(\int_{t_n}^{t_{n+1}}(\sigma(X(t))-\sigma(X(t_n)))dW(t)\right)\left(\int_{t_n}^{t_{n+1}}(\sigma(X(t))+\sigma(X(t_n)))dW(t)\right)\right]\biggr|\\
 		&+\frac16\biggl|\IE\left[\int_0^1(1-\tau)^3\phi^{(4)}(x+\tau(X(t_{n+1})-x))d\tau\,(X(t_{n+1})-x)^4\right]\biggr|\\
 		&+\frac16\biggl|\IE\left[\int_0^1(1-\tau)^3\phi^{(4)}(x+\tau(X_{n+1}-x))d\tau\,(X_{n+1}-x)^4\right]\biggr|+Ch^2\\
 		&\coloneqq \text{I+II+III+IV}+Ch^2,
 	\end{align*}
 	%{\color{blue} The $\mathcal{O}(h^2)$ comes from $\phi''(\int f,\int f)$, $\phi'''(\int f,\int \sigma,\int\sigma)$ and $\phi'''(\int f,\int f,\int f)$.}\\
 	where the term $Ch^2$ comes from the remaining expectations of the second and third derivatives of $\phi$ applied to the integrals, which are zero for odd number of stochastic integrals, and bounded by $Ch^2$ otherwise.  {The constant C is independent of $\epsilon$ thanks to the boundedness of the moments of the exact and numerical solutions, and to the regularity assumptions made in Assumption \ref{eq:hypo}}. By Lemma \ref{lemma:df}, Lemma \ref{lemma:strong} and the boundedness of the moments, the terms I, III and IV are $\mathcal{O}(h^2)$.
 	As for the integrand of the second term, we have using It\^o formula
 	\begin{align*}
 		\sigma(X(t))-\sigma(X(t_n)) 
 		% &=\int_{t_n}^td(\sigma(X(s)))\\
 		&=\int_{t_n}^t\biggl(\sigma'(X(s))f_{s/\eps}(X(s))\bigg.\\
 		&\bigg.+\frac12\sigma''(X(s))(\sigma(X(s)),\sigma(X(s)))\biggr)ds\\
 		&+\int_{t_n}^t\sigma'(X(s))\sigma(X(s))dW(s).
 	\end{align*}
 	%	where we use the notation: for $x\in\R^d$,  $\sigma(x)=(\sigma_1(x),\dots,\sigma_m(x))\in\R^{d\times m}$ with $\sigma_r\in\R^d,\,r=1,\dots,m$. 
 	Hence, using the Lipschitz-continuity  of $\sigma$, the polynomial growth of $\phi''$, Lemma \ref{lemma:strong}, and the following consequence of It\^o isometry 
 	\begin{equation*}
 		\forall g, h \in L^2_{ad}([a,b]\times\Omega), ~ \IE\left[\left(\int_a^bg(t)dW(t)\right)\left(\int_a^bh(t)dW(t)\right)\right]=\IE\left[\int_a^bg(t)h(t)dt\right]~~ 
 	\end{equation*}
 	we have
 	\begin{align*}
 		\text{II} &\leq C(x) \biggl|\IE\left[\left(\int_{t_n}^{t_{n+1}}(\sigma(X(t))-\sigma(X(t_n)))dW(t)\right)\left(\int_{t_n}^{t_{n+1}}(\sigma(X(t))-\sigma(X(t_n)))dW(t)\right.\right.\\
 		&\left.\left.+2\int_{t_n}^{t_{n+1}}\sigma(X(t_n))dW(t)\right)\right]\biggr|\\
 		&=C(x)\biggl|\IE\left[\left(\int_{t_n}^{t_{n+1}}(\sigma(X(t))-\sigma(X(t_n)))^2dt\right)\right]\biggr|\\
 		&+2C(x)\biggl|\IE\left[\left(\int_{t_n}^{t_{n+1}}\sigma(X(t_n))(\sigma(X(t))-\sigma(X(t_n)))dt\right)\right]\biggr|\\
 		&\leq C(x)L\int_{t_n}^{t_{n+1}}C(t-t_n)dt+2\left|\IE\left[\sigma(X(t_n))\int_{t_n}^{t_{n+1}}\int_{t_n}^td\sigma(X(s))dt\right]\right|\\
 		&=2\left|\IE\left[\sigma(X(t_n))\int_{t_n}^{t_{n+1}}\int_{t_n}^t\biggl(\sigma'(X_s)f_{s/\eps}(X_s)+\sigma''(X_s)(\sigma(X_s) ,\sigma(X_s) )\biggr)dsdt\right]\right|\\
 		&+2\left|\IE\left[\sigma(X(t_n))\int_{t_n}^{t_{n+1}}\int_{t_n}^t\sigma'(X_s)\sigma(X_s)dW(s)dt\right]\right|+C_1h^2.
 	\end{align*}
 	The first expectation after the last equal sign is clearly $\mathcal{O}(h^2)$. The second one is equal to zero since $X(t_n)$ is independent of the increments $W(r)-W(s)$ for $t_n\leq s \leq r$. The boundedness of the moments and the local weak order $2$ imply the global weak convergence of order  $1$ by a theorem from \cite{Mil95} (see also \cite[Chapter 2.2]{Mil04}).

 	%\begin{align*}
 	%	\text{II} &= \IE\left[\phi''(x)\left(\int_{t_n}^{t_{n+1}}\int_{t_n}^t\biggl(\sigma'f_{s/\eps}+\frac12 \sigma''(\sigma  ,\sigma  )\biggr)dsdW(t)\right)\left(\int_{t_n}^{t_{n+1}}(\sigma(X(t))+\sigma(X(t_n)))dW(t)\right)\right]\\
 	%	&+\IE\left[\phi''(x)\left(\int_{t_n}^{t_{n+1}}\int_{t_n}^t\sigma'(X(s))\sigma(X(s)) dW(s)dW(t)\right)\left(\int_{t_n}^{t_{n+1}}(\sigma(X(t))+\sigma(X(t_n)))dW(t)\right)\right].
 	%\end{align*}
 	%Using Cauchy-Schwarz inequality and the It\^o isometry, the first expectation is $\mathcal{O}(h^2)$.
 	
 	\subsection*{Step 3}
 	We first derive an upper-bound of $\IE(|X(t_{n+1})-X_{n+1}|^2; X(t_n)=x)$ as follows
 	\begin{align*}
 		&
 		\IE\biggl(\biggl|\int_{t_n}^{t_{n+1}}(f_{t/\eps}(X(t))-f_{t/\eps}(x))dt+\int_{t_n}^{t_{n+1}}(\sigma(X(t))-\sigma(x))dW(t)\biggr|^2\biggr)\\
 		&\leq  2\IE\biggl(\biggl|\int_{t_n}^{t_{n+1}}(f_{t/\eps}(X(t))-f_{t/\eps}(x))dt\biggr|^2\biggr) +
 		2\IE\biggl(\biggl|\int_{t_n}^{t_{n+1}}(\sigma(X(t))-\sigma(x))dW(t)\biggr|^2\biggr)\\
 		&\leq 2h\IE\biggl(\int_{t_n}^{t_{n+1}}\biggl|f_{t/\eps}(X(t))-f_{t/\eps}(X(t_n))\biggr|^2dt\biggr) +2\IE\biggl(\int_{t_n}^{t_{n+1}}\biggl|\sigma(X(t))-\sigma(X(t_n))\biggr|^2dt\biggr)\\
 		&\leq 2L^2(h+1) \int_{t_n}^{t_{n+1}}\IE(|X(t)-X(t_n)|^2)dt \leq 2L^2(h+1)\int_{t_n}^{t_{n+1}}C(t-t_n)dt \leq Ch^2,
 	\end{align*}
 	where we have used Lemma \eqref{lemma:strong}. Finally, we conclude that $\IE(|X(t_{n+1})-X_{n+1}|^2)^{\frac12}\leq Ch$. A well-known theorem by Milstein \cite{Mil87} then allows to establish inequality (\ref{eq:strong}). 
 \end{proof}
 
 { Remark that for the integral scheme we can relax the assumption $(f_\theta(.))_i,\,i=1,\dots,d\in C_{poly}^4\cap C_{lin}^2$ to $(f_\theta(.))_i,\,i=1,\dots,d\in C_{poly}^2$.}
 \section{Micro-Macro method}\label{sec:mm}
 { The integral scheme, despite its uniform accuracy, does not generalize to higher order methods. For this reason, and inspired by \cite{CCMM15,CLMV20}, we will introduce and analyze a micro-macro separation of scales that leads the standard Euler-Maruyama method to retain its usual weak and strong orders of convergence. We believe that this will be an excellent starting point and a good insight to try to develop higher order micro-macro methods in future works.}
 
  {As in general averaging theory, the purpose is to to find a periodic, near-identity and smooth change of variable $\Phi_\theta$, together with a flow $\Psi_t$, the flow map of an autonomous non-stiff differential equation on $\EE$, such that the solution of the original equation \eqref{eq:SDE} takes the composed form \cite{SVM07,LM88,CCMM15,CLMV20}
 	\begin{equation}\label{eq:decomp}
 		X(t)=\Phi_{t/\eps}\circ\Psi_t(X_0).
 \end{equation}}
  {In the current work, we restrict ourselves to first order averaging, and we show that the first order change of variable $\Phi_\theta$ defined, for all $0 < \eps \leq 1$, by the formula}
 \begin{align}\label{eq:Phi}
 	\Phi_\theta(x) = x + \eps F_\theta(x)  = x + \eps \int_0^\theta \left( f_\tau(x) - \avg{f}(x) \right) d\tau,
 \end{align}
  {and the first order flow $\Psi_t$ satisfying the autonomous equation
 	\begin{equation}\label{eq:Psi}
 		d\Psi_t(x)=\avg{f}(\Psi_t(x))dt+\sigma(\Psi_t(x))dW(t),\qquad \Psi_0(x)=x,
 	\end{equation}
 	both borrowed from deterministic averaging, work well to construct a \emph{uniform accurate micro-macro scheme of weak order 1 and strong order 1/2} for \eqref{eq:SDE}. For the derivation of \eqref{eq:Phi} and \eqref{eq:Psi} from \eqref{eq:decomp}, we refer to \cite{CLMV20}.}
 
  {Let us denote by $\uX$ the solution of \eqref{eq:Psi}}, separating slow and fast scales as follows
 \begin{align} \label{eq:dec}
 	X = \Phi_{t/\eps}( \uX ) + Y,
 \end{align}
 leads,  {using It\^o formula}, to the micro-macro system  of the form 
 \begin{align} 
 	d\uX &= \avg{f}(\uX) dt + \sigma(\uX) dW,  \quad \uX(0) = X_0, \label{eq:mmXbar}\\
 	dY &= \left(f_{t/\eps}\left( \Phi_{t/\eps}( \uX ) + Y\right)-f_{t/\eps}( \uX) 
 	- \eps F_{t/\eps}'(\uX) \avg{f}(\uX)\right. \label{eq:mmY}\\
 	&\left.- \frac{\eps}{2} F_{t/\eps}''(\uX) \left(\sigma(\uX) ,\sigma(\uX) \right) \right) dt \nonumber  \\
 	& + \left(\sigma(\Phi_{t/\eps}( \uX ) + Y) - \sigma(\uX) - \eps F_{t/\eps}'(\uX) \sigma(\uX)\right) dW, \quad Y(0) = 0. \nonumber 
 \end{align}
  {Here $Y$ represents the averaging error that is characterized by the equation \eqref{eq:mmY} and added after applying the change of variable in order to recover the solution $X(t)$ of \eqref{eq:SDE}. We will show later that the expected value of $Y(t)$ is of size $\eps$ (see Lemma \ref{lemma:est}), which is analogous to the averaging error for deterministic oscillatory equations ($\sigma\equiv0$).}
 
 { \begin{remark}
 		The existence and the uniqueness of the solution $Y(t)$ of \eqref{eq:mmY} is an immediate consequence of the existence and the uniqueness of the solutions $X(t)$ and $\uX(t)$ of \eqref{eq:SDE} and \eqref{eq:mmXbar}, and the decomposition \eqref{eq:dec}.
 \end{remark}}
 
 We use the same uniform discretization as for the integral scheme. Our aim is now to prove a uniform (in $\eps$) convergence result for the following micro-macro scheme, which is nothing but the  Euler-Maruyama method applied to (\ref{eq:mmXbar}, \ref{eq:mmY})
 \begin{align}
 	\uX_{n+1} &= \uX_n + h \avg{f} (\uX_n) + \sqrt{h} \sigma(\uX_n) \xi_n, \quad \uX_0 = X_0, \label{eq:EMXbar}\\
 	Y_{n+1} &= Y_n + h \left(f_{t/\eps}\left( \Phi_{t_n/\eps}( \uX_n ) + Y\right)-f_{t_n/\eps}( \uX_n) \right)  \label{eq:EMY} \\
 	& - \eps h \left(F_{t_n/\eps}'(\uX_n) \avg{f}(\uX_n) + \frac{1}{2}  F_{t_n/\eps}''(\uX_n) \left(\sigma(\uX_n) ,\sigma(\uX_n) \right) \right)  \nonumber \\
 	&+ \sqrt{h} \left(\sigma(\Phi_{t_n/\eps}( \uX_n ) + Y_n) - \sigma(\uX_n) - \eps F_{t_n/\eps}'(\uX_n) \sigma(\uX_n)\right)  \xi_n, \quad Y_0 = 0,\nonumber 
 \end{align}
 where the increment $\xi_n$ is a random quantity sampled from a normalized Gaussian centered at zero and with variance $1$. 
 \subsection{Main result}
 \begin{theorem}\label{th:main}
 	%Assume that Assumption \ref{eq:hypo} holds true. 
 	Consider the Euler-Maruyama scheme (\ref{eq:EMXbar}, \ref{eq:EMY}) for solving the {\em micro-macro} system (\ref{eq:mmXbar}, \ref{eq:mmY}) and let 
 	$$
 	X_n = \Phi_{t_n/\eps}(\uX_n) + Y_n
 	$$
 	for $n=0,\dots,N$.
 	Let $X(t)$ be the solution of \eqref{eq:SDE}. Then, for all $\phi \in C_{poly}^4$, there exists 
 	%$h_0 >0$ and 
 	$C>0$ independent of $h$, $n$, and $\eps$, such that, one has
 	\begin{align} \label{eq:mainest}
 		\forall n \in \{0, \ldots, N\}, \quad &|\IE(\phi(X(t_n))) - \IE(\phi(X_n)) | \leq C \,  h,\\
 		\forall n \in \{0, \ldots, N\}, \quad &\IE(|X(t_n) - X_n|^2)^{\frac12} \leq Ch^{\frac12}. 
 	\end{align}
 \end{theorem}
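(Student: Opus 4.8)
The plan is to exploit that the scheme (\ref{eq:EMXbar}, \ref{eq:EMY}) is nothing but the Euler--Maruyama method applied to the explicitly time-dependent system (\ref{eq:mmXbar}, \ref{eq:mmY}) for the pair $(\uX, Y) \in \R^d \times \R^d$, and that, by construction, the exact solution satisfies $X(t) = \Phi_{t/\eps}(\uX(t)) + Y(t)$ while the numerical one is $X_n = \Phi_{t_n/\eps}(\uX_n) + Y_n$. I would therefore first prove weak order $1$ and strong order $1/2$ for the pair $(\uX, Y)$ uniformly in $\eps$, and then transfer these estimates to $X$ through the change of variables $\Phi_\theta$. The transfer is cheap: since $\Phi_\theta = \mathrm{id} + \eps F_\theta$ with $F_\theta$ and its derivatives of polynomial growth bounded uniformly in $\theta$ and $\eps$, the map $(\bar x, y) \mapsto \Phi_{t_n/\eps}(\bar x)+y$ is a near-identity change of variable with $C^4_{poly}$ components bounded uniformly in $n$ and $\eps$ (this is exactly where the $C^4_{poly}$ regularity of $f$ in Assumption~\ref{eq:hypo} is used). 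Hence, for the strong estimate, $|X(t_n)-X_n| \le (1+\eps L)|\uX(t_n)-\uX_n| + |Y(t_n)-Y_n|$; and for the weak estimate, $\phi(X_n) = \psi_n(\uX_n, Y_n)$ with $\psi_n := \phi\circ(\Phi_{t_n/\eps}(\cdot)+\cdot) \in C^4_{poly}$, a valid test function with derivative bounds uniform in $n$ and $\eps$, so that weak order $1$ for the pair transfers directly.

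The heart of the argument, and the only place where uniformity in $\eps$ is in jeopardy, is the local error analysis for the $Y$-component. For $\uX$ the situation is classical: equation (\ref{eq:mmXbar}) is autonomous with $\eps$-independent coefficients $\avg{f}$ and $\sigma$ satisfying Assumption~\ref{eq:hypo}, so the three-step Talay--Milstein methodology used in the proof of Theorem~\ref{th:convint} applies verbatim. The danger lies in the fast time dependence through $t/\eps$ of the coefficients $b_Y(t,\bar x,y)$ and $a_Y(t,\bar x,y)$ of (\ref{eq:mmY}): a naive time-freezing step would produce a local drift error of size $h^2\|\partial_t b_Y\|$, and $\partial_t b_Y = \eps^{-1}(\partial_\theta b_Y)|_{\theta=t/\eps}$ appears to be of order $\eps^{-1}$, which would ruin the estimate for $h \gg \eps$. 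The key observation, which I would isolate as a lemma, is that the explicit $\eps$ prefactors in (\ref{eq:mmY}) cancel this apparent singularity. Taylor-expanding $f_\theta(\Phi_\theta(\bar x)+y) - f_\theta(\bar x)$ and $\sigma(\Phi_\theta(\bar x)+y)-\sigma(\bar x)$ around $\bar x$ and using $\Phi_\theta = \mathrm{id}+\eps F_\theta$, one finds that $b_Y(t,\bar x,0)$ and $a_Y(t,\bar x,0)$ are $O(\eps)$ and, crucially, that $\partial_\theta b_Y(\cdot,\bar x,0)$ and $\partial_\theta a_Y(\cdot,\bar x,0)$ are themselves $O(\eps)$, so that $\partial_t b_Y, \partial_t a_Y = O(1)$ uniformly in $\eps$. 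The $y$-dependent remainder is Lipschitz in $y$, with the fast phase contributing a factor $\eps^{-1}|y|$ to $\partial_t$, which is rendered $O(1)$ by the a priori smallness $|Y| = O(\eps)$ supplied by the forthcoming Lemma~\ref{lemma:est}; the same smallness controls the moments of $Y_n$ and $Y(t)$.

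With these uniform bounds in hand, namely $\eps$-independent Lipschitz constants, polynomial growth, and $O(1)$ time-regularity of $b_Y, a_Y$, I would run the same three steps as for Theorem~\ref{th:convint}. Step~1 bounds the moments of $(\uX_n,Y_n)$ of arbitrary order via \cite[Lemma~2.2]{Mil04}. Step~2 establishes a local weak error of order $h^2$: the contributions from freezing the spatial arguments are treated by It\^o--Taylor expansions exactly as before, while the contributions from freezing the fast phase are $O(h^2)$ precisely because $\|\partial_t b_Y\|, \|\partial_t a_Y\| = O(1)$; the theorem of \cite{Mil95} then upgrades this to global weak order $1$. Step~3 bounds the local strong error by $O(h)$ in $L^2$, the time-freezing of the diffusion contributing $O(h^{3/2})$ again thanks to the $O(1)$ time-regularity, and Milstein's theorem \cite{Mil87} yields global strong order $1/2$. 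Transferring through $\Phi_{t_n/\eps}$ as above then gives (\ref{eq:mainest}). The main obstacle is thus entirely concentrated in the cancellation lemma of the previous paragraph: once it is established that the fast oscillation enters the $Y$-dynamics only through $O(\eps)$-small, $O(1)$-time-regular coefficients, the remainder is a uniform-in-$\eps$ repetition of the classical Euler--Maruyama error analysis.
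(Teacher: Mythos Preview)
Your proposal is correct and follows essentially the same route as the paper's proof: reduce to Euler--Maruyama on the pair $Z=(\uX,Y)$, run the Talay--Milstein three-step argument, identify that the only dangerous $\eps^{-1}$ contributions come from $\partial_\theta g^2_\theta$ and $\partial_\theta\Sigma^2_\theta$, neutralize them via the Lipschitz continuity of $\partial_\theta f_\theta$ together with $\IE|Y(t)|\le c\eps$ from Lemma~\ref{lemma:est}, and finally transfer the estimates to $X$ through the near-identity $\Phi_{t_n/\eps}$. The only cosmetic difference is packaging: you phrase the cancellation as ``$\partial_\theta b_Y(\cdot,\bar x,0)$ and $\partial_\theta a_Y(\cdot,\bar x,0)$ are $O(\eps)$, with an $\eps^{-1}|y|$ remainder'', whereas the paper writes out $\partial_\theta g^2_\theta$ explicitly and bounds the single difference $\eps^{-1}\big(\partial_\theta f_\theta(\Phi_\theta(\uX)+Y)-\partial_\theta f_\theta(\uX)\big)$ directly; these are the same computation.
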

 %\begin{remark}
 %Maybe that $h_0$ depends on $\phi$, but maybe not. {\bf To be seen}.
 %\end{remark}
 Again, the proof of the theorem follows the usual steps from \cite{talay86}. Note that the main novelty of our result lies in the fact that estimate \eqref{eq:mainest} is uniform w.r.t. $\eps \in (0,1]$.  The following lemma will be needed in the proof of Theorem \ref{th:main}.
 \begin{lemma}\label{lemma:est}
 	There exists $c>0$ such that for all $\eps \in (0,1]$ and all $t\in[0,T]$, one has 
 	$$\IE(|Y(t)|)\leq c\eps.
 	$$
 \end{lemma}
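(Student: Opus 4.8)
The plan is to prove the stronger $L^2$ bound $\IE(|Y(t)|^2)\le c^2\eps^2$ and then deduce the statement from the Cauchy--Schwarz inequality, since $\IE(|Y(t)|)\le \IE(|Y(t)|^2)^{1/2}$. Before the main estimate, I would record two moment bounds that are uniform in $\eps$. Because the linear growth constant $K$ of $f_\theta$ is independent of $\theta$, the standard Gronwall argument for SDEs with linearly growing coefficients gives $\sup_{t\in[0,T]}\IE(|X(t)|^{2k})+\sup_{t\in[0,T]}\IE(|\uX(t)|^{2k})\le C_k$ with $C_k$ independent of $\eps$. Since $F_\theta$ inherits the linear growth of $f$ and is $1$-periodic in $\theta$ (the mean of $f_\tau-\avg{f}$ over a period vanishes), one has $|\Phi_{t/\eps}(\uX)|\le |\uX|+\eps|F_{t/\eps}(\uX)|\le C(1+|\uX|)$ uniformly in $\eps$ and $t$; combined with $Y=X-\Phi_{t/\eps}(\uX)$ this yields $\sup_t\IE(|Y(t)|^{2k})\le C_k$, again uniformly in $\eps$. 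These bounds guarantee that the stochastic integral appearing below is a genuine martingale and let me control in expectation the $\eps$-correction terms.

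Next I would apply It\^o's formula to $t\mapsto|Y(t)|^2$ and take expectations. Denoting by $a(t)$ and $b(t)$ the drift and diffusion coefficients of \eqref{eq:mmY}, the martingale part drops out and
\[
\frac{d}{dt}\IE(|Y(t)|^2)=\IE\bigl(2\,Y(t)\cdot a(t)+|b(t)|^2\bigr).
\]
The crucial estimate is that both $a$ and $b$ split into a part linear in $Y$ plus a remainder carrying an explicit factor $\eps$. Using the uniform-in-$\theta$ Lipschitz continuity of $f_\theta$ and of $\sigma$ together with $\Phi_{t/\eps}(\uX)-\uX=\eps F_{t/\eps}(\uX)$, so that $|f_{t/\eps}(\Phi_{t/\eps}(\uX)+Y)-f_{t/\eps}(\uX)|\le L|Y|+L\eps|F_{t/\eps}(\uX)|$, I obtain
\[
|a(t)|\le L\,|Y|+\eps\,G_1(\uX),\qquad |b(t)|\le L\,|Y|+\eps\,G_2(\uX),
\]
where $G_1,G_2$ are built from $F'_{t/\eps}$, $F''_{t/\eps}$, $\avg{f}$ and $\sigma$ evaluated at $\uX$, hence of polynomial growth and, by periodicity of $F$, bounded uniformly in $\theta=t/\eps$. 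The surviving factor $\eps$ is exactly what remains after the $\eps^{-1}$ produced by $\partial_t\Phi_{t/\eps}$ is cancelled by the construction of the micro-macro system, and verifying this cancellation together with the $\theta$-uniformity of the growth constants is the main point to check.

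Finally, Young's inequality turns the cross term into $2\,Y\cdot a\le(2L+1)|Y|^2+\eps^2G_1(\uX)^2$ and gives $|b|^2\le 2L^2|Y|^2+2\eps^2G_2(\uX)^2$; taking expectations and invoking the uniform moment bounds on $\uX$ produces
\[
\frac{d}{dt}\IE(|Y(t)|^2)\le C\,\IE(|Y(t)|^2)+C\eps^2,
\]
with $C$ independent of $\eps$. Since $Y(0)=0$, Gronwall's lemma yields $\IE(|Y(t)|^2)\le \eps^2(e^{CT}-1)$ on $[0,T]$, and Cauchy--Schwarz concludes with $c=(e^{CT}-1)^{1/2}$. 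The only genuine difficulty is the uniformity in $\eps$: everything reduces to confirming that no negative power of $\eps$ is left once the change of variables \eqref{eq:Phi} has been applied, which is precisely the design principle behind the micro-macro equation \eqref{eq:mmY}. Notably, this route uses only the Lipschitz assumptions and avoids the cubic-in-$Y$ terms that a full second-order Taylor expansion of $f_\theta$ would generate.
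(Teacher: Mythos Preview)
Your proof is correct and follows essentially the same route as the paper: bound the drift and diffusion of $Y$ by $L|Y|+\eps\cdot(\text{polynomial in }\uX)$ via the uniform Lipschitz assumption and $\Phi_\theta(\uX)-\uX=\eps F_\theta(\uX)$, then close with Gronwall on $\IE(|Y(t)|^2)$ and conclude by Cauchy--Schwarz. The only cosmetic difference is that the paper estimates $\IE(|Y(t)|^2)$ directly from the integral form of \eqref{eq:mmY} (Cauchy--Schwarz on the $dt$-integral, It\^o isometry on the $dW$-integral) to obtain an integral inequality, whereas you apply It\^o's formula to $|Y|^2$ and arrive at the equivalent differential inequality.
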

 
 \begin{proof}
 	First, note that we have for all $t\in[0,T]$ and all positive integer $m$  (see \cite{GS72}) 
 	$$\IE(|\uX(t)|^{2m})\leq K(1+|X_0|^{2m}).$$ 
 	Moreover, we have 
 	\begin{align*}
 		&\IE(|Y(t)|^2)\leq 2t\IE\left(
 		\int_0^t \biggl|f_{s/\eps}(\Phi_{s/\eps}(\uX(s))+Y(s)) - f_{s/\eps}(\uX(s))\right.\\
 		&\left.- \eps  \biggl( F'_{s/\eps}(\uX(s))\avg f(\uX(s))+\frac12  F''_{s/\eps}(\uX(s))(\sigma(\uX(s)) ,\sigma(\uX(s)) )\biggr)\biggr|^2
 		ds \right)\\
 		& + 2\IE\left(
 		\int_0^t \biggl|
 		\sigma(\Phi_{s/\eps}(\uX(s))+Y(s))-\sigma(\uX(s))-\eps F'_{s/\eps}(\uX(s))\sigma(\uX(s))
 		\biggr|^2
 		ds \right)\\
 		&\leq 4t
 		\int_0^t \IE\left(\biggl|f_{s/\eps}(\Phi_{s/\eps}(\uX(s))+Y(s)) - f_{s/\eps}(\uX(s))\biggr|^2\right)ds\\
 		&+4t\eps^2  \int_0^t\IE\left(\biggl| F'_{s/\eps}(\uX(s))\avg f(\uX(s))+\frac12  F''_{s/\eps}(\uX(s))(\sigma(\uX(s)) ,\sigma(\uX(s)) )\biggr|^2
 		\right)ds \\
 		& + 4
 		\int_0^t \IE\left(\biggl|
 		\sigma(\Phi_{s/\eps}(\uX(s))+Y(s))-\sigma(\uX(s))\biggr|^2 \right)ds
 		+4\eps^2 \int_0^t \IE\left(\biggl|F'_{s/\eps}(\uX(s))\sigma(\uX(s))
 		\biggr|^2
 		\right)ds \\
 		&\coloneqq 4tA+4t\eps^2B+4C+4\eps^2D.
 	\end{align*}
 	
 	In the first inequality, we have used the Cauchy-Schwarz inequality for the first term and It\^o isometry for the second one. We have also used the triangular inequality as well as Young's inequality.
 	Given that $\IE(|\uX(s)|^{2m})< +\infty$ and that the components of $F_\theta$  are in $C^2_{poly}$ by Assumption \ref{eq:hypo}, we can conclude that the terms $B$ and $D$ are uniformly bounded. Now, by definition of the change of variables \eqref{eq:Phi} and the uniform Lipschitz continuity of $f_\theta$ and $\sigma$,  we get
 	\begin{align*}
 		4tA+4C& \leq 4tL^2 \int_0^t \IE(|\eps F'_{s/\eps}(\uX(s))+Y(s)|^2)ds+4L^2 \int_0^t \IE(|\eps F'_{s/\eps}(\uX(s))+Y(s)|^2)ds\\
 		&\leq 8L^2(T+1)\eps^2\int_0^t\IE(| F'_{s/\eps}(\uX(s))|^2)ds
 		+8L^2(T+1)\int_0^t \IE(|Y(s)|^2)ds\\
 		&\leq C_1\eps^2 + C_2 \int_0^t \IE(|Y(s)|^2)ds.
 	\end{align*}
 	Therefore, $\IE(|Y(t)|^2)$ satisfies the following inequality
 	$$
 	\IE(|Y(t)|^2)\leq C_3\eps^2+C_4\int_0^t \IE(|Y(s)|^2)ds.
 	$$
 	Gronwall's lemma imply that $\IE(|Y(t)|^2)\leq c^2\eps^2$. Finally, $\IE(|Y(t)|)^2\leq\IE(|Y(t)|^2)=c^2\eps^2$, thus $\IE(|Y(t)|)\leq c\eps$ where $c$ is a positive constant independent of $\epsilon$ and $t$.
 	
 \end{proof}

 \begin{proof}[Proof of  Theorem \ref{th:main}]
 	We begin by the proof of the \textbf{weak convergence}: Collecting the variables $\uX$ and $Y$ into $Z = (\uX,Y)$ and similarly $z=(\ux, y)$,  we may rewrite the micro-macro system (\ref{eq:mmXbar}, \ref{eq:mmY}) as 
 	$$
 	dZ = g_{t/\eps}(Z) dt + \Sigma_{t/\eps}(Z) dW
 	$$
 	where 
 	$$
 	g_\theta(z) = \left( 
 	\begin{array}{c}
 		\avg{f}(\ux) \\
 		f_{\theta}\left( \Phi_{\theta}( \ux ) + y\right)-f_{\theta}( \ux) 
 		- \eps F_{\theta}'(\ux) \avg{f}(\ux) - \frac{\eps}{2}    F_{\theta}''(\ux) \left(\sigma(\ux)  ,\sigma(\ux)   \right)
 	\end{array}
 	\right)
 	$$
 	and 
 	$$
 	\Sigma_\theta(z) = \left( 
 	\begin{array}{c}
 		\sigma(\ux) \\
 		\sigma(\Phi_{\theta}( \ux ) + y) - \sigma(\ux) - \eps F_{\theta}'(\ux) \sigma(\ux)
 	\end{array}
 	\right)
 	$$
 	and accordingly equations \eqref{eq:EMXbar}, \eqref{eq:EMY} as 
 	$$
 	Z_{n+1} = Z_n + h g_{t_n/\eps}(Z_n) + \sqrt{h} \Sigma_{t_n/\eps}(Z_n) \xi_n
 	$$
 	where the $\xi_n \sim \mathcal{N}(0,I_m)$ are independent Gaussian random variables.
 	\subsection*{Step $1$}
 	{Since the vector fields $g_{t/\eps}$ and $\Sigma_{t/\eps}$ have uniform linear growth in $z$ (this follows from the linear growth of $f$, $\sigma$, $\Phi$, $F$, $F'$, and $F''$)}, the proof of the boundedness of the moments of the numerical solution can be obtained  following the same arguments as in  Milstein's lemma for the non-oscillating case $(\eps=1)$. As a matter of fact, we have that 
 	\begin{itemize}
 		\item[(i)] $\IE(|Z_0|^{2m})<+\infty$ since $Z_0=(X_0,0)\in\R^{2d}$ (deterministic);
 		\item[(ii)] $|\IE(Z_{n+1}-Z_n~|~Z_n=z)|=h|g_{t_n/\eps}(z)|\leq K(1+|z|)h$;
 		\item[(iii)] $|Z_{n+1}-Z_n|\leq h|g_{t_n/\eps}(Z_n)|+|\Sigma_{t_n/\eps}(Z_n)||\Delta W_n|\leq M_n(1+|Z_n|)\sqrt{h}$, with $\Delta W_n=W(t_{n+1})-W(t_n)\sim \mathcal{N}(0,h)$ and $M_n=K|\frac{\Delta W_n}{\sqrt{h}}|$ has clearly bounded moments uniformly in $n$.
 	\end{itemize}
 	We have used the fact that $g$ and $\Sigma$ grow linearly in $Z$ thanks to Assumption \ref{eq:hypo}.
 	Under the above conditions, Lemma 2.2 from \cite[p. 102]{Mil04} implies the boundedness of the moments of arbitrary order of $Z_n$.
 	
 	\subsection*{Step $2$} 	We define, for a function $\psi$ that depends explicitly on $\theta$ and $z$,
 	\begin{equation}\label{eq:L}
 		(\calL_{\theta}\psi)(z)=\partial_{\theta}\psi(\theta,z)+\partial_{z}\psi(\theta,z)g_{\theta}(z)+\frac12 \partial^2_{z}{\psi}(\theta,z)\left(\Sigma_{\theta}(z) ,\Sigma_{\theta}(z) \right).
 	\end{equation}
 	We have for any test function $\phi\in C_{poly}^4$ (independent of theta)
 	\begin{align*}
 		\IE(\phi(Z_{n+1})|Z_n=z) &\leq \phi(z) + h (\mathcal{L}_{t_n/\eps} \phi)(z)+Ch^2 \\
 		& \hspace*{-3ex}+ \frac{1}{6} \IE\left( \left. \int_0^1 (1-\tau)^3 \phi^{(4)} (z + \tau (Z_{n+1}-z))  d \tau \;  (Z_{n+1}-z)^4   \right| Z_n=z \right). 
 	\end{align*}
 	where, as in the proof of Theorem \ref{th:convint}, the constant $C$ is independent of $\eps$ and where the term $Ch^2$ comes from the remaining expectations of the second and third derivatives of $\phi$ applied repeatedly to $hg+\sqrt{h}\Sigma\xi_n$, which are zero for odd moments of $\xi_n$, and bounded by $Ch^2$ otherwise. For the part of the remainder coming from the above first order Taylor expansion of the expectation of $\phi(Z_{n+1})$, it is clearly bounded by $Ch^2$, where the constant $C$ is independent of $\eps$. This stems from  the polynomial growth of the test function $\phi$ and its derivatives up to order $4$, as well as from the bound $|Z_{n+1}-Z_n|\leq M_n(1+|Z_n|)\sqrt{h}$.
 	
 	Performing the Taylor expansion of the expectation of the test function $\phi$ applied to the exact solution leads to
 	$$
 	\IE(\phi(Z(t_{n+1}))|Z_n=z) = \phi(z) + h (\mathcal{L}_{t_n/\eps} \phi)(z) +R,
 	$$
 	where, according to \cite[P. 26]{Mil95} \footnote{There are three other terms with vanishing expectations.}
 	\begin{equation*}
 		R=\IE\left(\int_{t_n}^{t_{n+1}}\left(\int_{t_n}^{s}(\calL_{\tau/\eps}^2\phi)(Z(\tau))d\tau\right)ds|Z(t_n)=z\right).
 	\end{equation*}
 	Let us notice that the derivative with respect to $\theta$ will not appear in $\calL_{t_n/\eps}\phi$ because our test function $\phi$ does not depend explicitly on time.  However, it will appear in $\calL_{t_n/\eps}^2\phi$ owing to the explicit dependence of $g_{t/\eps}$ and $\Sigma_{t/\eps}$ on $t$, and only because of this derivation with respect to $\theta$, we will have terms of order $\mathcal{O}(\frac1\eps)$ that need to be bounded.
 	
 	Let $g^1_{\theta}$ and $g^2_{\theta}$ be the two components of $g_\theta$ (both belong to $\R^d$), then $\partial_\theta g_\theta(z)=(\partial_\theta g_\theta^1(z),\partial_\theta g_\theta^2(z))^T=(0,\partial_\theta g_\theta^2(z))^T$, with
 	
 	\begin{align*}
 		\partial_\theta (g_\theta^2(z))=\partial_\theta (g_\theta^2(\ux,y))&=\frac1\eps\partial_\theta f_{t/\eps}(\Phi_{t/\eps}(\ux)+y)+\frac1\eps f'_{t/\eps}(\Phi_{t/\eps}(\ux)+y)\partial_\theta\Phi_{t/\eps}(\ux)\\
 		&-\frac1\eps\partial_\theta f_{t/\eps}(\ux)-\partial_\theta F'_{t/\eps}(\ux)\avg{f}(\ux)\\
 		&-\frac12  \partial_\theta F''_{t/\eps}(\ux)(\sigma(\ux) ,\sigma(\ux) ).
 	\end{align*}
 	Using the same notations for $\Sigma_\theta(z)$, we have
 	\begin{align*}
 		\partial_\theta(\Sigma_{t/\eps}^2(z))=\frac1\eps\sigma'(\Phi_{t/\eps}(\ux)+y)\partial_\theta\Phi_{t/\eps}(\ux)-\partial_\theta F'_{t/\eps}(\ux)\sigma(\ux).
 	\end{align*}
 	It remains  to bound the difference 
 	
 	\begin{equation}\label{eq:bound}
 		\frac1\eps\partial_\theta f_{\tau/\eps}(\Phi_{\tau/\eps}(\uX(\tau))+Y(\tau))-\frac1\eps\partial_\theta f_{\tau/\eps}(\uX(\tau)),
 	\end{equation}	
 	since all other terms are bounded independently of $\eps$ thanks to the smoothnes of $f_\theta$, $\sigma$ and their derivatives with respect to the space variable, and using the fact that $\partial_\theta \Phi_{\tau/\eps}(X)=\mathcal{O}(\eps)$. Now, taking into account the  uniform Lipschitz continuity of  $\partial_\theta f_\theta$, the polynomial growth of the test function $\phi$, and the above lemma, we have
 	
 	\begin{equation*}
 		\begin{split}
 			\left|\IE\left(\frac1\eps\partial_\theta f_{\tau/\eps}(\Phi_{\tau/\eps}(\uX(\tau))+Y(\tau))-\frac1\eps\partial_\theta f_{\tau/\eps}(\uX(\tau))\right)\right|&\leq \frac L\eps\IE\left|\Phi_{\tau/\eps}(\uX(\tau))+Y(\tau)-\uX(\tau)\right|\\
 			&\leq \frac L\eps\IE\left|\eps F_{\tau/\eps}(\uX(\tau))\right|+\frac L\eps\IE\left|Y(\tau)\right|\\
 			&=L\IE\left|F_{\tau/\eps}(\uX(\tau))\right|+Lc\\
 			&\leq L(K(1+\IE\left|\uX(\tau)\right|)+c)<+\infty
 		\end{split}
 	\end{equation*}
 	independently of the value of $\eps$. Hence, we have
 	\begin{equation*}
 		\left|\IE\left(\int_{t_n}^{t_{n+1}}\left(\int_{t_n}^{s}(\calL_{\tau/\eps}^2\phi)(Z(\tau))d\tau\right)ds|Z(t_n)=z\right)\right|\leq C'h^2,
 	\end{equation*}
 	and the local order 2 is proved.
 	
 	\subsection*{Step $3$} 
 	The boundedness of the moments and the local weak order $2$ imply the global weak convergence of order  $1$ by a theorem from \cite{Mil95} (see also \cite[chap2.2]{Mil04}).\\
 	
 	We have proved, for any test function $\phi\in C^4_{poly}(\R^{2d},\R)$, for all $n=0,1,\dots,N$, that 
 	\begin{equation*}
 		\left|\IE(\phi(Z(t_n))-\phi(Z_n))\right|\leq Ch,
 	\end{equation*}
 	where $Z=(\uX,Y)$ and $C$ is independent of $n$ and $\eps$. We need to prove that for any test function $\phi\in C^4_{poly}(\R^d,\R)$, for all $n=0,1,\dots,N$,
 	\begin{equation*}
 		\left|\IE(\phi(X(t_n))-\phi(X_n))\right|=\left|\IE(\phi(\Phi_{t_n/\eps}(\uX(t_n))+Y(t_n))-\phi(\Phi_{t_n/\eps}(\uX_n)+Y_n))\right|\leq C_1h.
 	\end{equation*}
 	where $C_1$ is independent of $n$ and $\eps$. Let $\phi\in C^4_{poly}(\R^d,\R)$, for each fixed parameter $\theta \in \mathbb{T}$, we consider the test function
 	\begin{equation*}
 		\psi_\theta(Z) = \phi(\Phi_\theta(\uX)+Y).
 	\end{equation*}
 	Note that $\psi_\theta \in C^4_{poly}(\R^d,\R)$, since $\Phi_\theta\in C^4_{poly}(\R^d,\R^d)$ (by assumption on $f_\theta$). Hence, for each $n=0,1,\dots,N$, we have
 	\begin{equation*}
 		\begin{split}
 			\left|\IE(\psi_{t_n/\eps}(Z(t_n))-\psi_{t_n/\eps}(Z_n))\right|&=\left|\IE(\phi(\Phi_{t_n/\eps}(\uX(t_n))+Y(t_n))-\phi(\Phi_{t_n/\eps}(\uX_n)+Y_n))\right|\\	&=\left|\IE(\phi(X(t_n))-\phi(X_n))\right|\leq C_1h,
 		\end{split}
 	\end{equation*}
 	where $C_1=C$. \\
 	
 	\paragraph{\textbf {Strong convergence}} For the strong convergence with order 1/2 (for $Z$), in addition to the bounded moments and the local weak order $2$, using \cite{Mil87}, it is sufficient to show first order  strong convergence after one step, i.e, 
 	\begin{equation}\label{eq:locstr}
 		\IE(|Z(t_{n+1})-Z_{n+1}|^2 ; Z(t_n)=z)^{\frac12}\leq Ch,
 	\end{equation}
 	where the generic constant $C$ is independent of $n, h$, and $\eps$.  By Wagner-Platen expansion  \cite[chap1.2.2]{Mil95}, we have
 	
 	\begin{align*}
 		Z(t_{n+1})-Z_{n+1}&=\hspace*{-0.5ex}\int_{t_n}^{t_{n+1}}\hspace*{-1.5ex}\int_{t_n}^{s}\Lambda_{\tau/\eps} \Sigma_{\tau/\eps}(Z(\tau))dW(\tau)dW(s)+ \int_{t_n}^{t_{n+1}}\hspace*{-1.5ex}\int_{t_n}^{s}\calL_{\tau/\eps} \Sigma_{\tau/\eps}(Z(\tau))d\tau dW(s)\\
 		& + \int_{t_n}^{t_{n+1}}\hspace*{-1.5ex}\int_{t_n}^{s}\Lambda_{\tau/\eps} g_{\tau/\eps}(Z(\tau))dW(\tau) d(s)+ \int_{t_n}^{t_{n+1}}\hspace*{-1.5ex}\int_{t_n}^{s}\calL_{\tau/\eps} g_{\tau/\eps}(Z(\tau))d\tau d(s)
 	\end{align*}
 	where $\Lambda \varphi_\theta (z)= \varphi'_\theta (z)\Sigma_\theta(z)$, $\calL_\theta$ is defined in \eqref{eq:L} and 
 	\begin{equation*}
 		(\Lambda_\theta\psi)(z)=\psi'(z)\Sigma_\theta(z).
 	\end{equation*}
 	The only terms we need to bound are the derivatives with respect to $\theta$ of $g_\theta(Z)$ and $\Sigma_\theta(Z)$ at $\theta=\tau/\eps$ arising from the application of the differential operator $\calL_{\tau/\eps}$ to $g_{\tau/\eps}(Z)$ and $\Sigma_{\tau/\eps}(Z)$ respectively.  It can be checked that the other terms in the integrals are again uniformly bounded by the regularity assumptions, Lemma \ref{lemma:est}, and the boundedness of the moments of $Z(t)$. Once this is done, we can conclude that \eqref{eq:locstr} is satisfied. The uniform boundedness of $\partial_{\theta} g_{\tau/\eps}(Z)$ was already proved in the weak convergence case. Now, we have  $\partial_\theta \Sigma_{\tau/\eps}(Z)=(0,\partial_\theta\Sigma_\theta^2(Z))$, and
 	$$
 	\partial_\theta\Sigma^2_{\tau/\eps}(Z) = 
 	\frac1\eps\sigma'(\Phi_{\tau/\eps}( \uX ) +  Y) \partial_\theta\Phi_{\tau/\eps}(\uX) - \partial_\theta F_{\tau/\eps}'(\uX) \sigma(\uX)
 	$$
 	
 	The right hand side is uniformly bounded since $\partial_\theta \Phi_\theta(x)=\mathcal{O}(\eps)$ (thanks to the periodicity of $f_\theta$ with respect to $\theta$ and the definition of $\avg f$), $\sigma'(x)$ and $\partial_\theta F'_\theta(x)$ have at most polynomial growth, and $\sigma$ has linear growth. It follows that, after several applications of the It\^o isometry, the Cauchy-Schwarz and the Young inequalities,
 	\begin{align*}
 		\IE(|Z(t_{n+1})-Z_{n+1}|^2) \leq C h^2,
 	\end{align*}
 	and thus  $\IE(|Z(t_{n+1})-Z_{n+1}|^2 )^{\frac12}\leq Ch$.
 	
 	{Now we conclude the local strong order $1$ for $X$. First, note that 
 		$$
 		\IE(|Z(t_{n+1})-Z_{n+1}|^2 )= \IE(|\uX(t_{n+1})-\uX_{n+1}|^2) + \IE(|Y(t_{n+1})-Y_{n+1}|^2)\leq Ch^2,
 		$$
 		which implies that each component of $Z_{n+1}=(\uX_{n+1},Y_{n+1})$ converges strongly with local order $1$ to the corresponding component of $Z(t_{n+1})=(\uX(t_{n+1}),Y(t_{n+1}))$. Next, for a given $X(t_n)=X_n$, we have  
 		\begin{equation*}
 			\begin{split}
 				&\IE
 				\left(\left|X(t_{n+1})-X_{n+1}\right|^2\right) = \IE\left(\left|\Phi_{t_{n+1}/\eps}(\uX(t_{n+1}))+Y(t_{n+1})-\Phi_{t_{n+1}/\eps}(\uX_{n+1})-Y_{n+1}\right|^2\right)\\
 				&\leq \IE\left(\left|\Phi_{t_{n+1}/\eps}(\uX(t_{n+1}))-\Phi_{t_{n+1}/\eps}(\uX_{n+1})\right|^2\right)+\IE\left(\left|Y(t_{n+1})-Y_{n+1}\right|^2\right)\\
 				&+2\IE\left(\left|\left(\Phi_{t_{n+1}/\eps}(\uX(t_{n+1}))-\Phi_{t_{n+1}/\eps}(\uX_{n+1})\right)\left(Y(t_{n+1})-Y_{n+1}\right)\right|\right)\\
 				&\leq L^2\IE\left(\left|\uX(t_{n+1})-\uX_{n+1}\right|^2\right) + \IE\left(\left|Y(t_{n+1})-Y_{n+1}\right|^2\right)
 				\\
 				&+2\IE\left(\left|\Phi_{t_{n+1}/\eps}(\uX(t_{n+1}))-\Phi_{t_{n+1}/\eps}(\uX_{n+1})\right|^2\right)^{\frac12}\IE\left(\left|Y(t_{n+1})-Y_{n+1}\right|^2\right)^{\frac12}
 				\\
 				&\leq Ch^2 + Ch^2 +2(Ch)(Ch) \leq Ch^2.
 			\end{split}
 		\end{equation*}
 		Therefore,
 		$$
 		\IE
 		\left(\left|X(t_{n+1})-X_{n+1}\right|^2\right)^{\frac12}\leq Ch
 		$$
 		and the local strong order 1 is proved.}
 \end{proof}
 {
 	\begin{remark}
 		When applying the Euler-Maruyama method directly to \eqref{eq:SDE}, a term that contains $\frac1\eps \partial_\theta f_{t/\epsilon}(X(t_n))$ will appear in the remainder. This term causes the local truncation error to be of order $\mathcal{O}(\frac1\epsilon)$ which causes instability in the high frequency regime ($\epsilon\ll1$) even when Assumption \ref{eq:hypo} is satisfied. In contrast, applying Euler Maruyama method to the micro-macro system (\ref{eq:mmXbar},\ref{eq:mmY}) leads instead of $\frac1\eps \partial_\theta f_{t/\epsilon}(X(t_n))$ to the term \eqref{eq:bound} which can be bounded independently of $\epsilon$ using Assumption \ref{eq:hypo}. 
 \end{remark}}
 
 \section{Numerical experiments} \label{sec:num}

 Throughout this section (except for the last experiment) we will focus on the H\'enon-Heiles model (see \cite{BB14,HMD19}). We consider the Hamiltonian 
 \begin{equation*}
 	H(p,q)= \frac{p_1^2}{2\eps} + \frac{p_2^2}{2} + \frac{q_1^2}{2\eps} + \frac{q_2^2}{2}+q_1^2q_2 - \frac13 q_2^3.
 \end{equation*}
 Let 
 \begin{equation*}
 	\begin{cases}
 		X_1(t)=\cos\left(\frac t\eps\right)q_1(t) - \sin\left(\frac t\eps\right)p_1(t),\\
 		X_2(t)=q_2(t),\\
 		X_3(t)=\sin\left(\frac t\eps\right)q_1(t) +\cos\left(\frac t\eps\right)p_1(t),\\
 		X_4(t)=p_2(t).
 	\end{cases}
 \end{equation*}
 It can be checked that the variable $X(t)$ satisfies the following ODE \cite{CLMV20}
 \begin{equation*}
 	\frac{dX}{dt}(t)=f_{t/\eps}(X(t)),
 \end{equation*}
 with,
 \begin{equation*}
 	\begin{cases}
 		f^1_{\theta}(X)=2\sin\theta(X_1\cos\theta+X_3\sin\theta)X_2,\\
 		f^2_{\theta}(X)=X_4,\\
 		f^3_{\theta}(X)=-2\cos\theta(X_1\cos\theta+X_3\sin\theta)X_2,\\
 		f^4_{\theta}(X)=-2(X_1\cos\theta+X_3\sin\theta)^2+X_2^2-X_2.
 	\end{cases}
 \end{equation*}
 Now, we consider the SDE
 \begin{equation}\label{eq:sdenum}
 	dX=f_{t/\eps}(X)dt+\sigma(X)dW(t).
 \end{equation}
 In all our experiments $\eps=2^{-2i},~i=2,3,4,5$, final time $T=1$;
 $M$ denotes the number of computed samples and $\Delta t$ denotes the time step size.
 \subsection{Weak convergence}
 In this section we use $X^0=(0.7,0.7,0.7,0.7)$, $\Delta t=2^{-i},~i=1,\dots,5$, $M=10^4$.
 \subsubsection{Multiplicative noise}
 We consider the above SDE \eqref{eq:sdenum} with multiplicative noise where $\sigma(X)=0.2(0,0,X_1,X_2)^T$. We use the test function $\phi(X)=X_1$ to measure the weak convergence. In  Figure \ref{fig:weakmult}A we plot the weak error with respect to the time step for different values of $\eps$ (left figure) using the micro-macro method \eqref{eq:EMXbar}-\eqref{eq:EMY}. We can see that the convergence behavior looks almost the same, with weak order one,  for all the different values of $\eps$. The right picture of Figure \ref{fig:weakmult}A shows that for a fixed time step, the weak error remains almost constant when varying $\eps$. The above description applies also to the integral scheme (see Figure \ref{fig:weakmult}B).
 
 \begin{figure}[t!]
 	\centering
 	\begin{subfigure}{\linewidth}
 		\includegraphics[width=\linewidth]{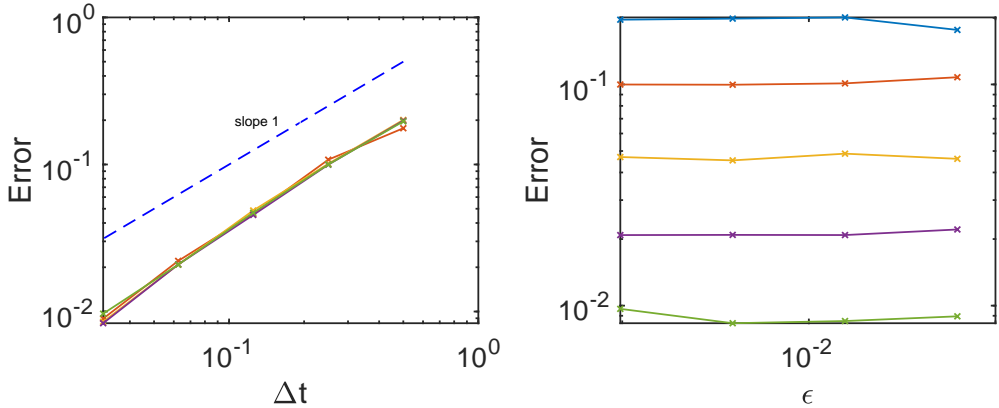}
 		\caption{Micro-Macro scheme.}
 	\end{subfigure}\\
 	\begin{subfigure}{\linewidth}
 		\includegraphics[width=\linewidth]{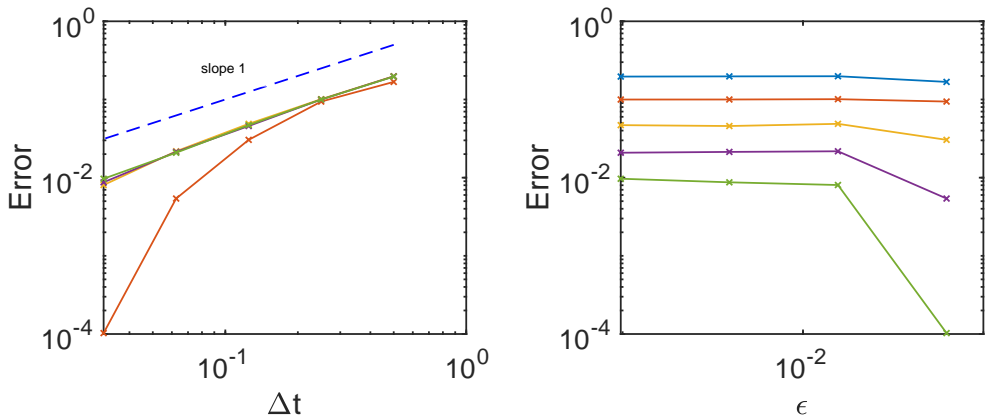}
 		\caption{Integral scheme.}
 	\end{subfigure}\\
 	\caption{Weak convergence with multiplicative noise for the micro-macro method \eqref{eq:EMXbar}-\eqref{eq:EMY} and the integral scheme\eqref{eq:int} .}
 	\label{fig:weakmult}
 \end{figure}
 % \vspace*{-1ex}
 
 \subsubsection{Additive noise}
 The weak error of the micro-macro scheme \eqref{eq:EMXbar}-\eqref{eq:EMY} applied to the SDE \eqref{eq:sdenum} with additive noise is shown in Figure \ref{fig:weakadd}. We set $\sigma(X)=(0,0,0.2,0.2)^T$, and we perform the test with two different test functions. We see again the uniform weak order one. 
 \begin{figure}[t!]
 	\centering
 	\begin{subfigure}{\linewidth}
 		\includegraphics[width=\linewidth]{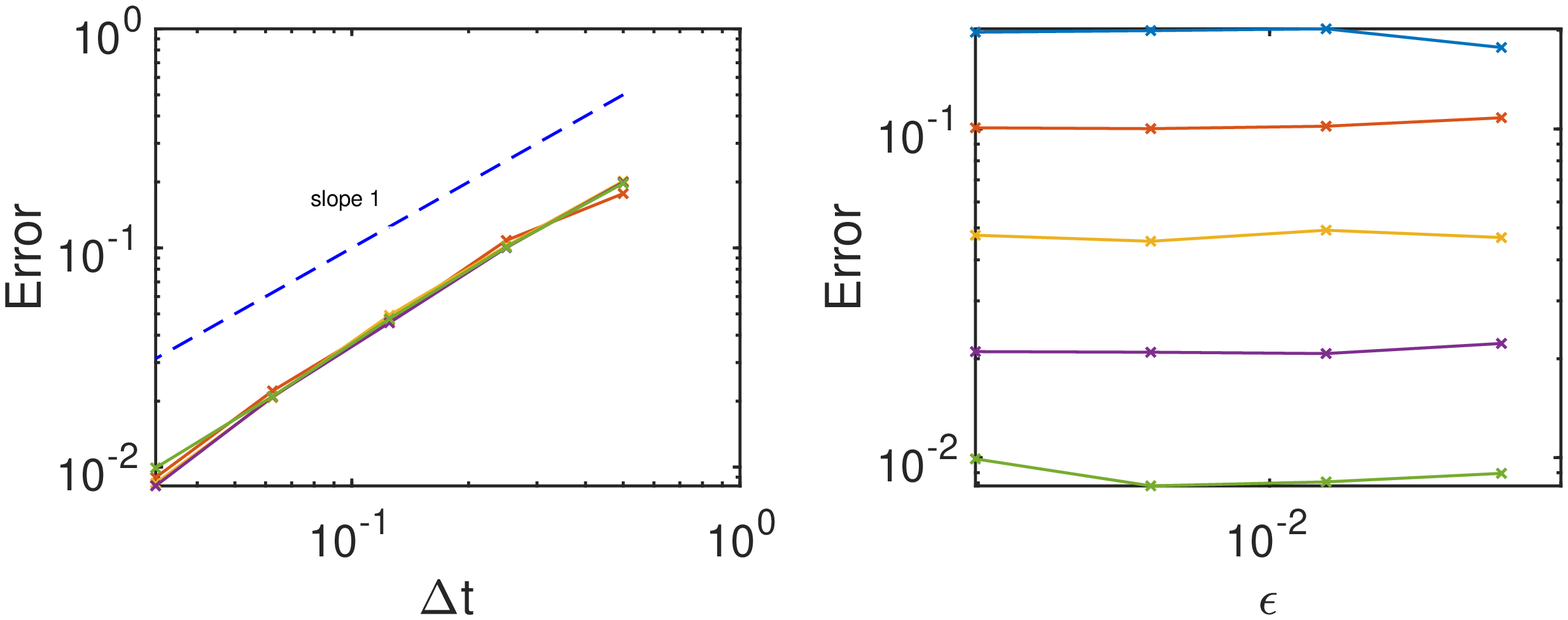}
 		\caption{$\phi(X)=X_1$.}
 	\end{subfigure}\\
 	\begin{subfigure}{\linewidth}
 		\includegraphics[width=\linewidth]{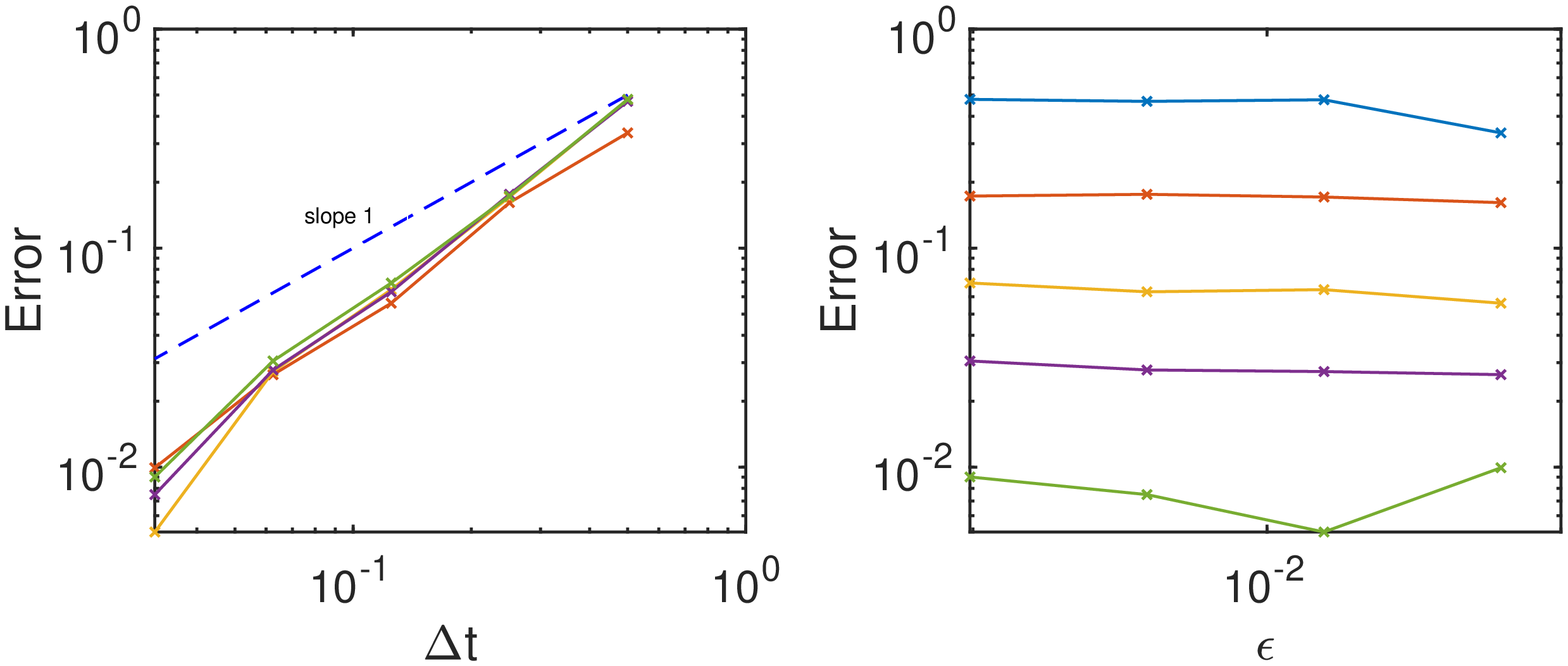}
 		\caption{$\phi(X)=\sum_{i=1}^{4}X_i$.}
 	\end{subfigure}\\
 	\caption{Weak convergence with additive noise for the micro-macro method \eqref{eq:EMXbar}-\eqref{eq:EMY}.}
 	\label{fig:weakadd}
 \end{figure}
 \subsection{Strong convergence}
 
 In this section we use $X^0=(0.12,0.12,0.12,0.12)$, $\Delta t=2^{-i},~i=4,\dots,8$, $M=10^2$. 
 
 \subsubsection{Multiplicative noise}
 We consider the above SDE \eqref{eq:sdenum} with multiplicative noise where $\sigma(X)=0.5X$. Figure \ref{fig:strongmult} shows the uniform strong order $\frac12$ for both methods.
 
 \begin{figure}[t!]
 	\centering
 	\begin{subfigure}{\linewidth}
 		\includegraphics[width=\linewidth, height=0.25\textheight]{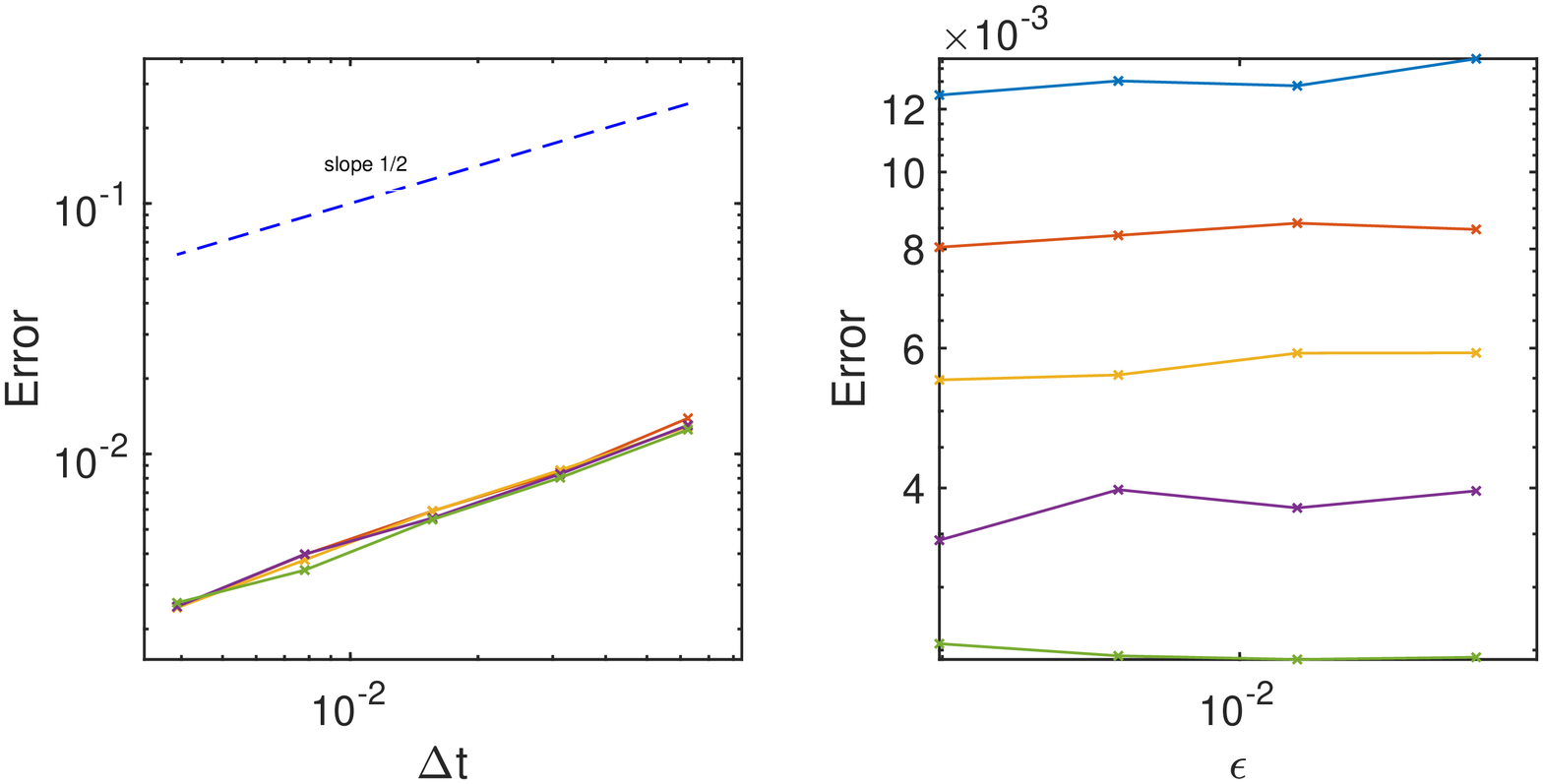}
 		\caption{Micro-Macro.}
 	\end{subfigure}\\
 	\begin{subfigure}{\linewidth}
 		\includegraphics[width=\linewidth, height=0.25\textheight]{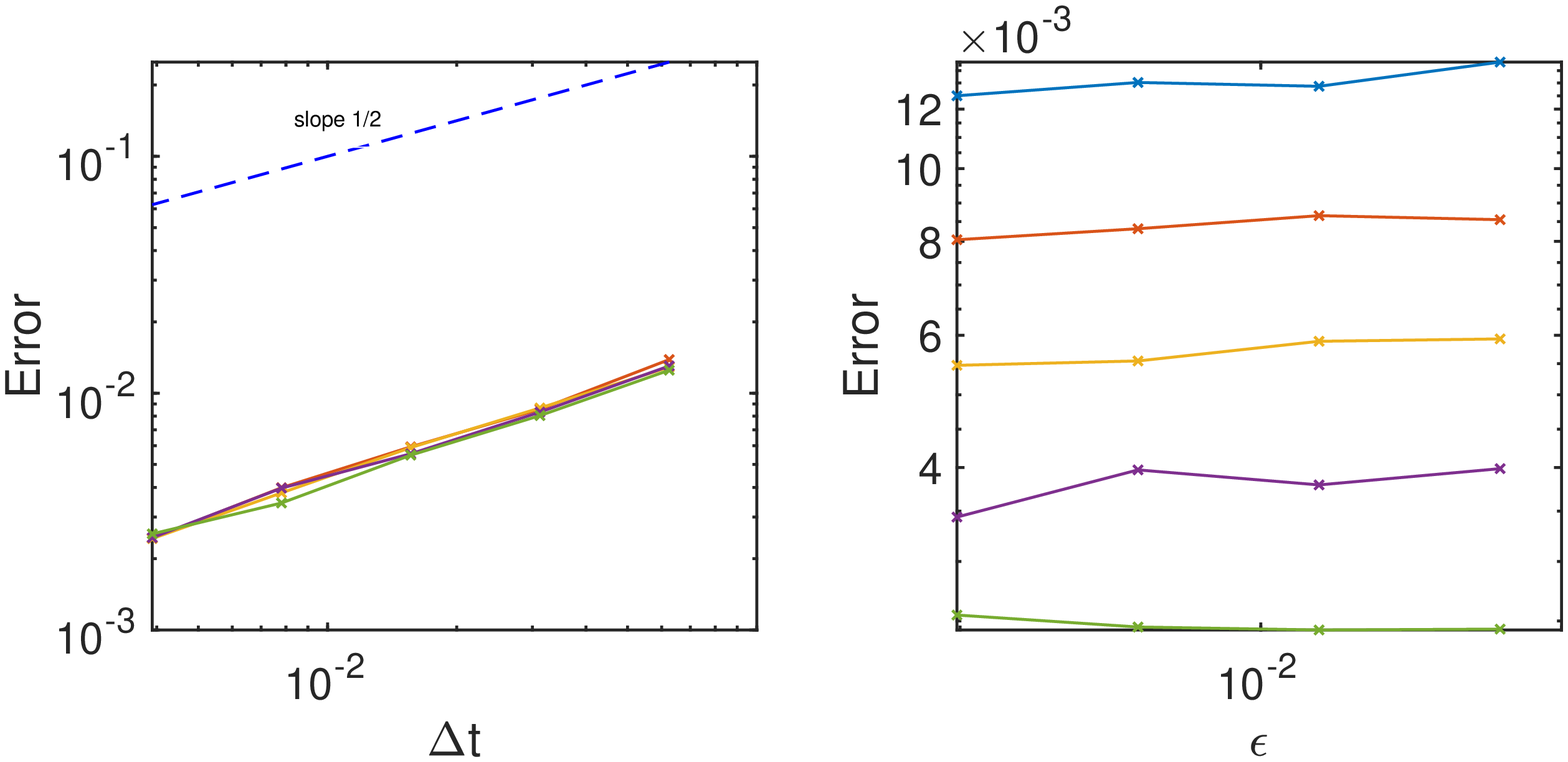}
 		\caption{Integral scheme.}
 	\end{subfigure}\\
 	\caption{Strong convergence with multiplicative noise for methods \eqref{eq:EMXbar}-\eqref{eq:EMY} and \eqref{eq:int}.}
 	\label{fig:strongmult}
 \end{figure}
 
 \subsubsection{Additive noise}
 We consider the above SDE \eqref{eq:sdenum} with additive noise where $\sigma(X)=(0,0,0.5,0.5)^T$. In addition to the uniform convergence, Figure \ref{fig:strongadd} shows strong order one for the micro-macro method \eqref{eq:EMXbar}-\eqref{eq:EMY} since when the noise is additive, Euler-Maruyama method coincides with Milstein method of strong order one. This applies to uniformly accurate methods too.
 
 \begin{figure}[t!]
 	\centering
 	\includegraphics[width=\linewidth]{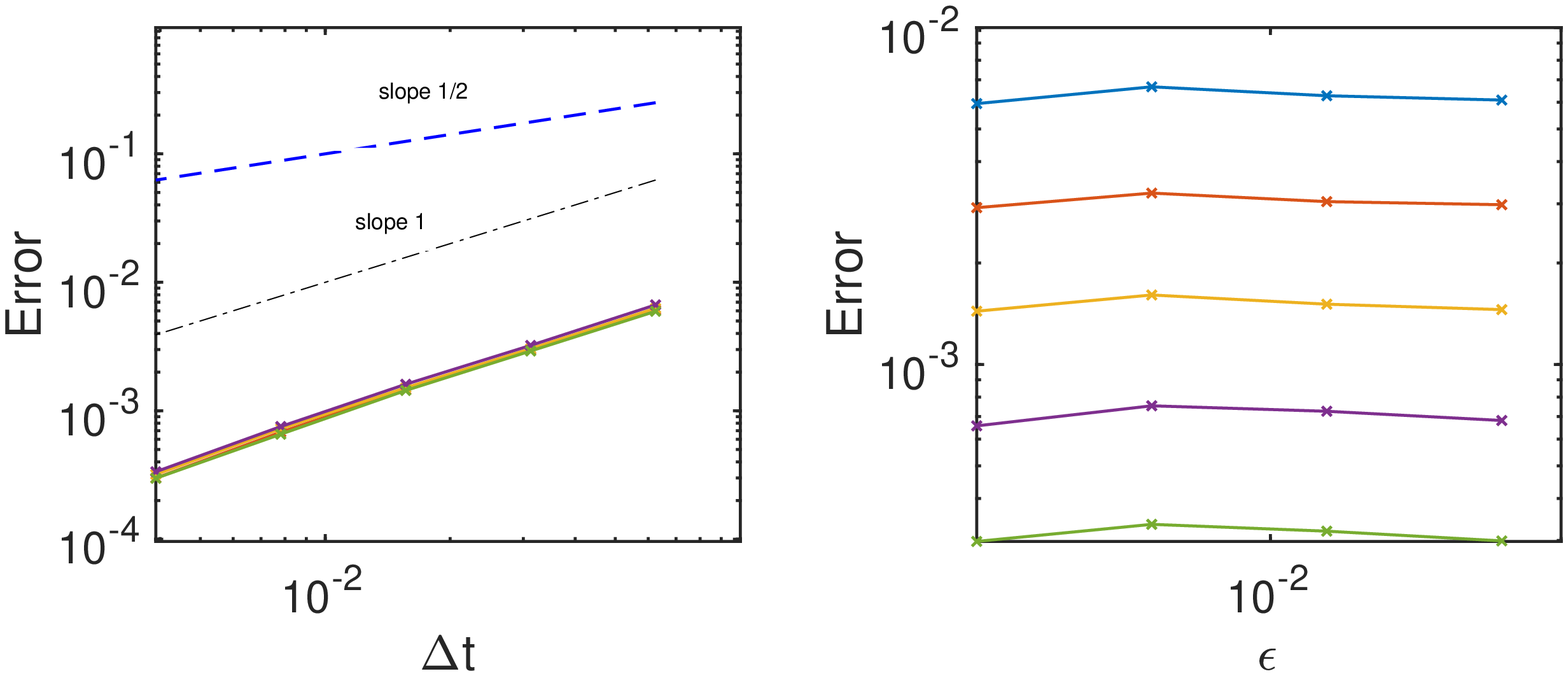}
 	\caption{Strong convergence with additive noise for the micro-macro method \eqref{eq:EMXbar}-\eqref{eq:EMY}.}
 	\label{fig:strongadd}
 \end{figure}

 \subsection{Inefficiency of Euler-Maruyama method for particular time steps}
 
 Although Euler-Maruyama method seems to work quite well, it still fails for some particular choices of time steps, while the Micro-Macro method \eqref{eq:EMXbar}-\eqref{eq:EMY} does not. We recall that the importance of uniformly accurate methods appears more when using higher order schemes. See Figure \ref{fig:unEM}.
 We consider the logistic SDE
 \begin{equation*}
 	dX=\left(X(1-X)+\sin\frac t\eps\right)dt + 0.2XdW(t),\qquad X(0)=2.
 \end{equation*}
 We set $\eps=0.1$ and we plot in Figure \ref{fig:unEM}A the reference solution (in blue) calculated with very small time step using the integral scheme and the solution obtained using EM with time step $\Delta t=0.99(2\pi\eps)$. In Figure \ref{fig:unEM}B, we plot the reference solution (in blue) calculated with very small time step using the integral scheme and the solution obtained using the uniformly accurate method \eqref{eq:EMXbar}-\eqref{eq:EMY} with time step $h=0.99(2\pi\eps)$.
 
 \begin{figure}[t!]
 	\centering
 	\begin{subfigure}{0.49\linewidth}
 		\includegraphics[width=\linewidth]{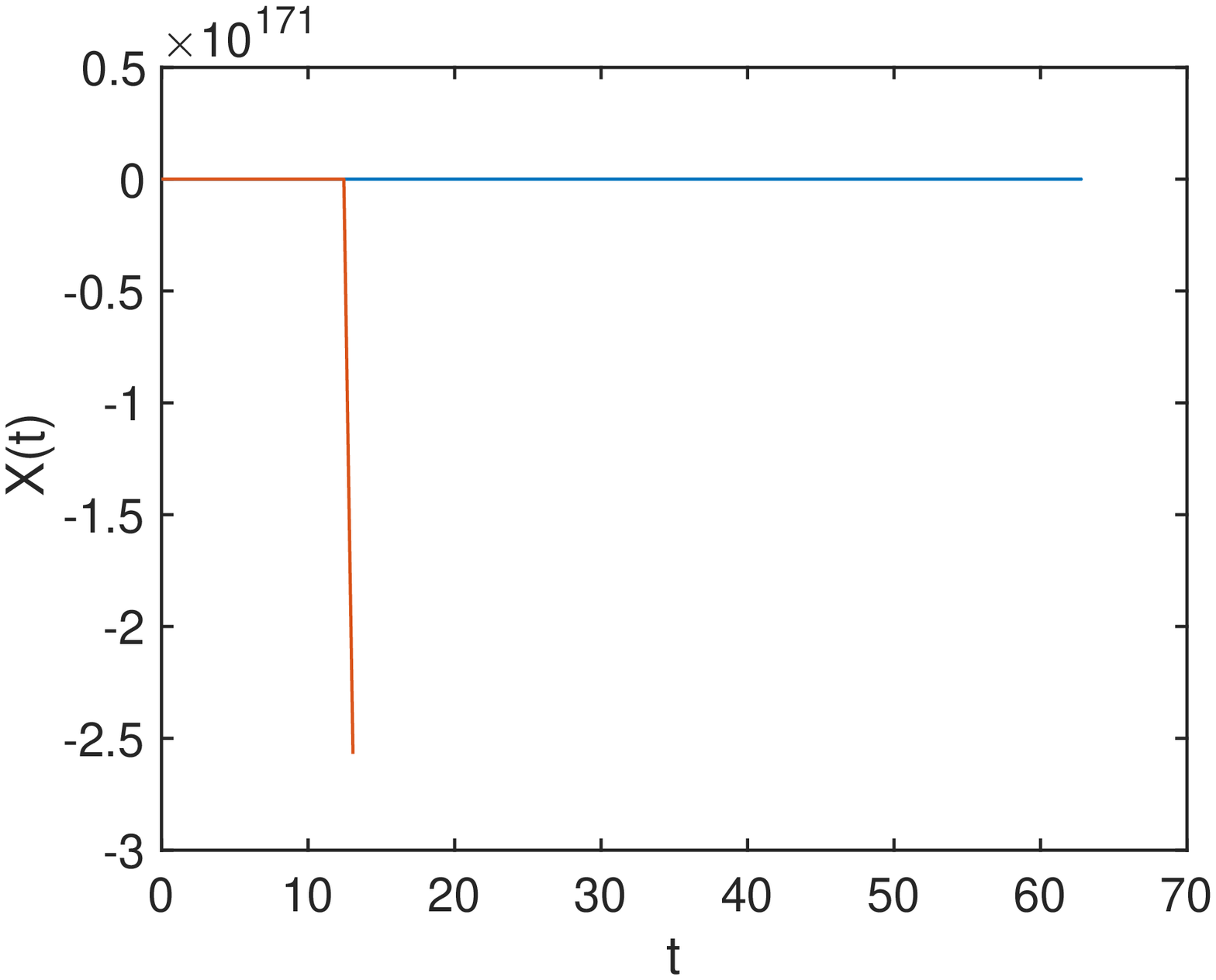}
 		\caption{Euler Maruyama}
 	\end{subfigure}
 	\begin{subfigure}{0.49\linewidth}
 		\includegraphics[width=\linewidth]{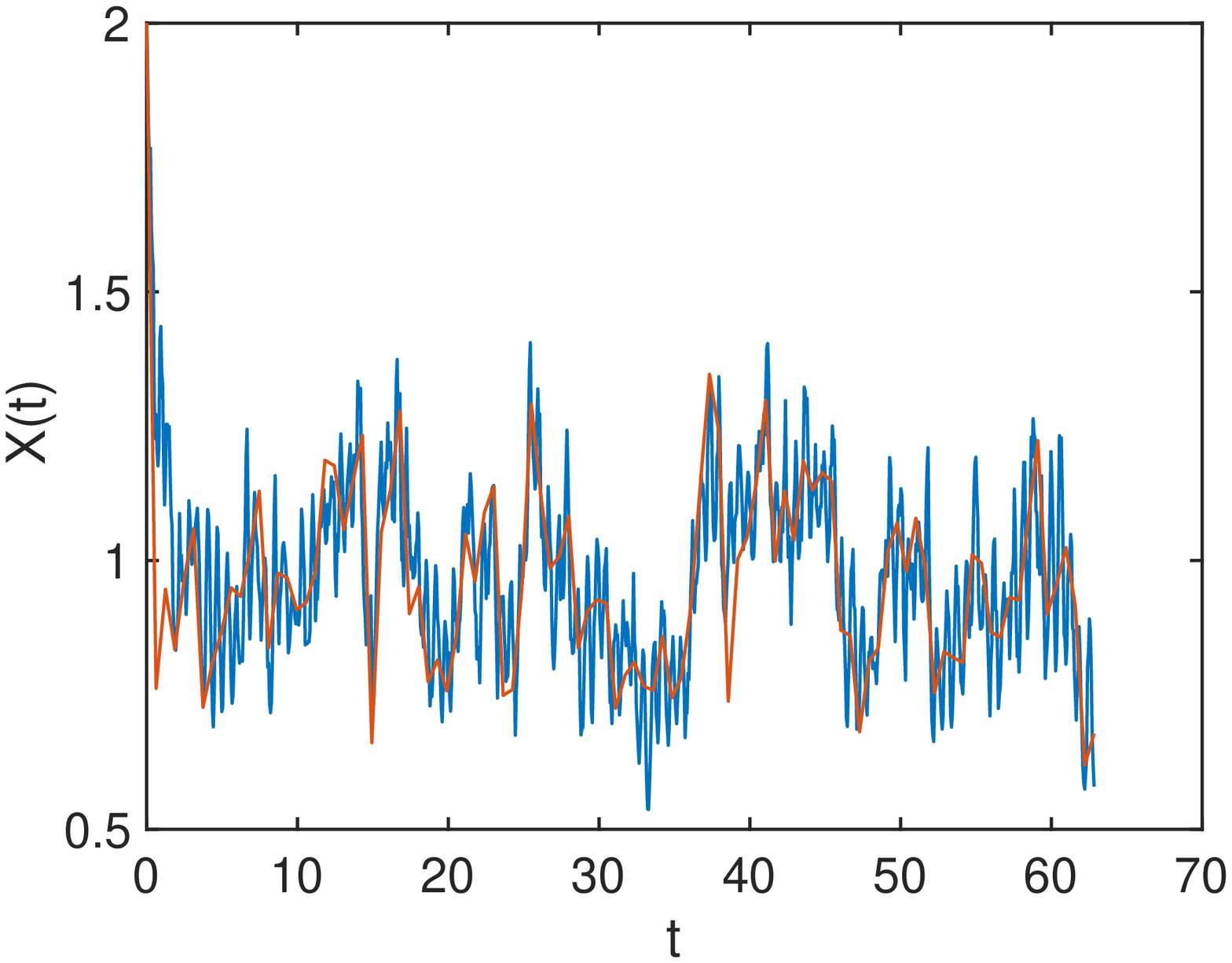}
 		\caption{Uniformly accurate method (\ref{eq:EMXbar}, \ref{eq:EMY})}
 	\end{subfigure}
 	\caption{Failure of EM for particular time steps.}
 	\label{fig:unEM}
 \end{figure}
 
 \section{Conclusion}
 
 In this work, we have introduced two uniformly accurate methods for solving numerically stochastic differential equations with oscillatory drift. The first one is the so-called integral scheme (\ref{eq:int}) and can be derived quite straightforwardly, whereas the second one is obtained through a more elaborate transformation, namely a micro-macro decomposition (\ref{eq:mmXbar}, \ref{eq:mmY}). Both schemes exhibit weak-order $1$ and strong-order $1/2$, as proved in the corresponding sections and confirmed numerically in Section \ref{sec:num}. Given their comparable performance, the first scheme is arguably better for its simplicity. However, it is our belief that the micro-macro scheme exposed here could be generalized to higher order methods which would be the stochastic counterpart of existing deterministic uniformly accurate methods \cite{BFS17,CLMV20}.
 
 \bigskip
 
 \noindent \paragraph{\textbf{Acknowledgement.}} The authors would like to thank Gilles Vilmart for useful discussions and comments.

\bibliographystyle{abbrv}
\bibliography{HLW,complete,abd_biblio,biblio_hop}

 \end{document}